\newcommand{\Z}{\mathbb{Z}}
\newcommand\vr{\mathcal{V}}
\newcommand\sort{\mathrm{sort}}
\newcommand\ph[1]{\text{PH}_{#1}}
\newcommand\lune{\mathrm{lune}}
\newcommand\rng{\mathrm{RNG}}
\newcommand\rnc{\mathrm{RNC}}
\newcommand\diam{\mathrm{diam}}
\newcommand\crit{\mathrm{Crit}}
\newcommand\match{\mathrm{Match}}
\newcommand\supp{\mathrm{Supp}}
\newcommand\crng{\mathrm{CRNG}}
\newcommand\crnc{\mathrm{CRNC}}
\newcommand\reach{\mathrm{reach}}
\newcommand\rvr{\mathcal{R}}
\newcommand\dvr{\mathcal{D}}
\theoremstyle{plain}
\newtheorem*{theorem*}{Theorem}
\newtheorem{theorem}{Theorem}[section]
\newtheorem{lemma}[theorem]{Lemma}
\theoremstyle{definition}
\newtheorem{definition}[theorem]{Definition}
\newtheorem{example}[theorem]{Example}
\newtheorem{remark}[theorem]{Remark}
\newtheorem{conjecture}[theorem]{Conjecture}
\title{The distilled Vietoris-Rips filtration for persistent homology and a new memory-efficient algorithm. }
\author{Musashi Ayrton Koyama, Vanessa Robins, Katharine Turner}
\date{April 2024}
\begin{document}

\maketitle

\begin{abstract}
The long computational time and large memory requirements for computing Vietoris-Rips persistent homology from point clouds remains a significant deterrent to its application to `big data'. This paper aims to reduce the memory footprint of these computations. It presents a new construction, the \emph{distilled Vietoris-Rips filtration}, and proves that its persistent homology is isomorphic to that of standard Vietoris-Rips. The distilled complex is constructed using a discrete Morse vector field defined on the reduced Vietoris-Rips complex. The algorithm for building and reducing the distilled filtration boundary matrix is highly parallelisable and memory efficient. It can be  implemented for point clouds in any metric space given the pairwise distance matrix.   
\end{abstract}

\section{Introduction}

Computing persistent homology on large data sets has been, and still remains one of the most significant bottlenecks to the adoption of persistent homology by the wider scientific community. While there are software packages which can compute persistent homology for large point clouds, usually these come with some sort of restriction on the data-set. 
For example, persistent homology of the Cech filtration is efficiently computed using alpha-shapes for point clouds in low-dimensional Euclidean spaces. 

The Vietoris-Rips filtration is a natural choice of data structure for computing persistent homology when working with arbitrary finite metric spaces.  
This is because constructing the Vietoris-Rips filtration only requires the pairwise distance matrix. However, whilst applying Vietoris-Rips persistent homology to finite metric spaces is \emph{conceptually} simple, applying it in practice to finite metric spaces of even a modest size presents difficulties. Ripser \cite{Ripser}, along with its GPU accelerated counterpart \cite{zhang2020gpu} are the current state of the art software for computing Vietoris-Rips persistent homology on finite metric spaces. With that said, computing $\ph{1}(X)$ for $|X| = 10^5$ and above remains a computation limited to the realm of supercomputers and is currently not even remotely feasible on a civilian machine. 

There are currently two main roadblocks to computing Vietoris-Rips persistent homology on larger point clouds. One is a prohibitively long runtime, the other is machine memory. Of these two issues, memory remains the limiting factor. The superlinear memory usage of current software means that even with improved hardware, the increase in what we are able to analyse will not increase substantially. 

One way to deal with the runtime is to use parallelization. 
In the software $\mathsf{Dory}$, parallelization was used to compute Vietoris-Rips persistent persistent homology for a point cloud with $10^{6}$ points and assisted by setting a small stopping radius for the filtration  
\cite{AGGARWAL2024102290}. 

The large memory requirements of currently available software stems from the fact that a large number of simplices are required for computation. 
Algorithms to reduce the number of simplices required for computation have been explored in \cite{glisse_et_al:LIPIcs.SoCG.2022.44}, though the author states that these techniques would be more suited for computing $\ph{q}(X)$ where $q\geq 2$ and that the results are mixed for computing $\ph{1}(X)$.  In \cite{koyama2024fastercomputationdegree1persistent}, the reduced Vietoris-Rips complex was introduced as a way to reduce the number of $2$-simplices required to compute $\ph{1}(X)$ from $O(n^3)$ to $O(n^2)$ provided $X$ has bounded doubling dimension. 

In this paper we present an algorithm that is highly parallelizable and memory efficient, though it may require a significant number of cores to obtain a runtime that is acceptable to the user.

\section{Background material}

Here we briefly present the necessary background material and establish notation. 

\subsection{Homology}

We give basic definitions for simplicial homology following Munkres' text \emph{Elements of Algebraic Topology}~\cite{munkres2018elements}.
First we  define the notion of a simplicial complex.

\begin{definition}[Simplicial complex]
    A simplicial complex with vertex set $V = \{v_{1},...,v_{n}\}$ is a set $K \subset 2^{V}$ which satisfies the following properties. 

    \begin{itemize}
        \item $\emptyset \in K$
        
        \item $\{v_{i}\} \in K$ for all $i\in \{1,...,n\}$. 

        \item If $\sigma \in K$, then all subsets of $\sigma$ are also in $K$. 

        \item If $\tau \in K$ and $\sigma \in K$ then $\tau \cap \sigma \in K$. 
    \end{itemize}
\end{definition}

Next we define the notion of a simplex. 

\begin{definition}[$q$-simplex]
    Consider a simplicial complex $K$ with vertex set $V = \{v_1,...,v_n\}$. Then let $\sigma$ be a subset of $V$ with $q+1$ elements. Then we refer to $\sigma$ as a $q$-simplex.
\end{definition}

We will make reference to the dimension of a simplex in Definition \ref{binary-relation-on-simplices}. 

\begin{definition}[Dimension of a simplex]
    Consider a simplicial complex $K$. Let $\sigma$ be a $q$-simplex, then we say that $\sigma$ has dimension $q$ and denote this by $\dim (\sigma) = q$
\end{definition}

It will be convenient later on to have a special term for when one simplex is a subset of another simplex. 

\begin{definition}[Faces and cofaces]
    Consider a simplicial complex $K$ with simplices $\sigma, \tau$ with $\sigma \subset \tau$. Then we say that $\sigma$ is a face of $\tau$ and $\tau$ is a coface of $\sigma$. 
\end{definition}

From here on in we write $\sigma = \{w_0,...,w_q\}$ as $\langle w_0...w_{q}\rangle$, following  standard notation for oriented simplices. Since we are working with $\mathbb{Z}_{2}$-coefficients we can effectively ignore the orientation of the simplices. This means that we can refer to a given simplex using any permutation of its vertices. For example, the $2$-simplex $\langle xyz \rangle$ can equally be referred to as $\langle xzy \rangle = \langle zxy \rangle = \langle zyx \rangle = \langle yzx \rangle = \langle yxz \rangle$.  
The addition of simplices is formalised in the next definition. 

\begin{definition}[Simplicial $q$-chain]
    A simplicial $q$-chain is a finite formal sum of $q$ simplices,  
    \begin{equation}
        \sum_{ i = 1}^{N}c_i\sigma_i
    \end{equation}
    In this paper, the coefficients $c_i$ are taken from $\mathbb{Z}_2$. 
\end{definition}

We now define three important vector spaces. 

\begin{definition}[Chain group]
    $C_{q}(K)$ is the free abelian group with coefficients in $\mathbb{Z}_2$ with generating set consisting of all $q$-simplices. It is customary to set $C_{-1}(K) = \mathbb{Z}_{2}$. 
\end{definition}

\begin{remark}
    It is worth noting that $C_{q}(K)$ is actually a vector space since $\mathbb{Z}_{2}$ is a field and that from this point on, any time the word ``group" is mentioned it could be replaced with ``vector space". We continue to use the word group to adhere to the ``traditional" presentation of homology, though the reader unfamiliar with groups can replace them with vector spaces for the purposes of this paper. 
\end{remark}

\begin{definition}[Boundary map]
    Let $K$ be a simplicial complex. Consider the map $\partial_{q+1}^{K}: C_{q+1}(K)\rightarrow C_{q}(K)$ defined as follows. Let $\sigma = \langle v_{0},...,v_{q}\rangle$. Then we define $\partial_{q+1}^{K} (\sigma)$ as follows:

    \begin{equation}
        \partial_{q+1}^{K} (\sigma) = \sum_{i=0}^{q}\langle v_{0}...\hat{v_i}...v_q \rangle  .
    \end{equation}
    We extend this linearly to a map on $C_{q+1}(K)$.
    Here $\hat{v_i}$ means that $v_i$ is to be omitted from $\langle v_0...v_i...v_q\rangle$. When it is clear what $K$ is, we may write $\partial_{q+1}^K$ as $\partial_{q+1}$. When $q$ is also clear, we may simply write $\partial_{q+1}$ as $\partial$. 
    The map $\partial_{0}^K: C_{0}(K) \rightarrow C_{-1}(K) = \mathbb{Z}_{2}$ acts in the following fashion:
    \begin{equation}
        \partial_{0}^{K} (\langle v_0 \rangle) = 1 .
    \end{equation}
It follows that $\partial_{0}^{K}$ maps a simplicial $0$-chain to the parity of the number of $0$-simplices in the chain. 
\end{definition}

\begin{definition}[$q$-cycles]
    Consider a simplicial complex $K$. We denote $\mathrm{ker}(\partial_q^K) = Z_{q}(K)$. We call a $q$-chain $c \in Z_{q}(K)$ a $q$-cycle. $Z_{q}(K)$ will be referred to as the group of $q$-cycles. 
\end{definition}

\begin{definition}[$q$-boundaries]
    Consider a simplicial complex $K$. We denote $\mathrm{im}(\partial_{q+1}) = B_{q}(K)$. We call a $q$-chain $c \in B_{q}(K)$ a $q$-boundary. $B_{q}(K)$ will be referred to as the group of $q$-boundaries. 
\end{definition}

We are now ready to define the homology groups of a simplicial complex $K$, but before we do, we state an extremely easy to verify lemma. 

\begin{lemma}
\label{boundary-of-boundary}
    Consider a simplicial complex $K$. Then we have $B_{q}(K) \subset Z_{q}(K)$. 
\end{lemma}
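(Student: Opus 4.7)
The claim $B_q(K) \subset Z_q(K)$ is equivalent to $\mathrm{im}(\partial_{q+1}) \subset \ker(\partial_q)$, so the plan is to prove the standard identity $\partial_q \circ \partial_{q+1} = 0$. Since both $\partial_q$ and $\partial_{q+1}$ are defined by linear extension from their action on simplices, it suffices by linearity to verify that $\partial_q(\partial_{q+1}(\sigma)) = 0$ for an arbitrary $(q+1)$-simplex $\sigma = \langle v_0 \ldots v_{q+1}\rangle$.

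The key step is a bookkeeping argument on the double sum obtained by applying the boundary formula twice. Expanding,
\begin{equation}
    \partial_q(\partial_{q+1}(\sigma)) = \sum_{i=0}^{q+1} \partial_q \bigl(\langle v_0 \ldots \hat{v_i} \ldots v_{q+1}\rangle\bigr),
\end{equation}
and each inner boundary is itself a sum of $q$-simplices obtained by deleting a second vertex. I would organize the resulting terms by which \emph{pair} of indices $\{i,j\}$ with $i < j$ is removed: the codimension-2 face $\langle v_0 \ldots \hat{v_i} \ldots \hat{v_j} \ldots v_{q+1}\rangle$ arises exactly twice, once from the term where $v_i$ is deleted first and then $v_j$, and once from the term where $v_j$ is deleted first and then $v_i$. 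Since the coefficient ring is $\mathbb{Z}_2$, these two identical copies sum to zero, and summing over all pairs $\{i,j\}$ gives $\partial_q(\partial_{q+1}(\sigma)) = 0$.

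The main, and essentially only, subtle point is the edge case $q = 0$, where $\partial_0$ is not defined by the omission formula but by $\partial_0(\langle v\rangle) = 1 \in \mathbb{Z}_2 = C_{-1}(K)$. Here I would verify the identity directly: for a $1$-simplex $\langle v_0 v_1\rangle$ we get $\partial_1(\langle v_0 v_1\rangle) = \langle v_1\rangle + \langle v_0\rangle$, and then $\partial_0$ sends this to $1 + 1 = 0$ in $\mathbb{Z}_2$, as required. With this boundary case handled and the pairing argument above, the inclusion $B_q(K) \subset Z_q(K)$ follows for all $q \geq 0$.

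I do not expect any real obstacle; the entire proof is a combinatorial pairing over $\mathbb{Z}_2$, and the only place one has to pay attention is to confirm that the two ways of deleting a pair of vertices genuinely produce the same unsigned $q$-simplex (which is immediate since we work with $\mathbb{Z}_2$ coefficients and ignore orientation, as noted after the definition of $q$-simplex).
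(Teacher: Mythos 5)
Your proof is correct: the paper states this lemma without proof (calling it ``extremely easy to verify''), and your argument --- reducing to $\partial_q \circ \partial_{q+1} = 0$, pairing the two deletion orders of each index pair $\{i,j\}$ so they cancel over $\mathbb{Z}_2$, and checking the $q=0$ case against the convention $\partial_0(\langle v\rangle) = 1$ --- is exactly the standard argument the authors intend.
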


\begin{definition}[Homology groups of a simplicial complex]
Consider a simplicial complex $K$. Then the $q$th homology group is defined as $H_{q}(K) = Z_{q}(K)/B_{q}(K)$ 
\end{definition}

Note that Lemma $\ref{boundary-of-boundary}$ is necessary to show that $B_{q}(K)$ is indeed a subgroup of $Z_{q}(K)$ and hence the quotient group can be taken. It is here that we note that elements of $H_{q}(K)$ will be written as $[\gamma]$ to denote the fact that $\gamma$ is a representative of the class $[\gamma] \in H_{q}(K)$. Sometimes we will also use coset notation and write $[\gamma]$ as $\gamma + B_{q}(K)$. 

\subsection{Persistent Homology}

The following is a brief summary of some basic definitions in persistent homology. The reader who desires more context and details is directed towards chapter 7 of \emph{Computational Topology} \cite{book}, where the definitions below come from. 

\begin{definition}[Filtration of simplicial complexes]
Given an index set $\mathcal{I}$ and a set of simplicial complexes $(K_{i})_{i\in \mathcal{I}}$, if for $i \leq j$ in $\mathcal{I}$ we have $K_{i} \subset K_{j}$, we call the collection $(K_{i})_{i\in \mathcal{I}}$ a filtration of simplicial complexes.
\end{definition}

In the algorithm for computing persistent homology, we require 
a particular type of filtration.

\begin{definition}[Simplex-wise filtrations]
\label{def-simplex-wise-filtrations}
$(K_{i})_{i\in \mathcal{I}}$ is a simplex-wise filtration when $\mathcal{I} = \{0,...,m\} \subset \Z$ and $K_{i} = K_{i-1}\cup \sigma_{i}$  for $i \leq m$ where $\sigma_{i}$ is a single simplex and it is understood that $K_{0} = \emptyset$.

\end{definition}
As per this definition, $m$ will always denote the number of simplices of all possible dimensions in the filtration. 

Consider a simplex-wise filtration. For $i<j$ we apply the degree $q$ homology functor $H_{q}(-)$ to the inclusion $K_{i} \subset K_{j}$ to obtain a linear homomorphism $f_{i}^{j}: H_{q}(K_{i})\rightarrow H_{q}(K_j)$. Persistent homology quantifies how the homology changes across the parameter range. 

\begin{definition}[Birth index and giving birth]
Consider a simplex-wise filtration $(K_{i})_{i\in \{0,...,m\}}$. A homology class $[\gamma] \in H_{q}(K_{j})$ is said to be born at index $j$ if $j$ is the smallest index such that for all $j' < j$ there is no $[\beta] \in H_{q}(K_{j'})$ with  $f_{j'}^{j}([\beta ]) = [\gamma]$. The simplex $\sigma_j$ is said to give birth to the homology class $\gamma$. 
\end{definition}

\begin{definition}[Death index and killing]
\label{definition-of-death}
Consider a simplex-wise filtration $(K_{i})_{i\in \{0,...,m\}}$. A homology class $[\gamma]$ born at $j$ is said to die at index $k$ if $k$ is the smallest index such that $f_{j}^{k}([\gamma]) = f_{j'}^{k}([\beta])$ for some $j' < j$ and some $[\beta] \in H_{q}(K_{j'})$. The simplex $\sigma_{k}$, such that $K_{k} = K_{k-1}\cup \sigma_{k}$, is said to kill the homology class $[\gamma]$. 
\end{definition}

An important fact to establish is that for a simplex-wise filtration $(K_{i})_{i\in \{0,...,m\}}$ each simplex can only give birth to a homology class or kill a homology class. We prove this fact by first establishing some lemmas. 

\begin{lemma}
\label{can-only-birth}
    Let $(K_{i})_{i\in \{0,...,m\}}$ be a simplex-wise filtration and consider the change in homology between $K_{i-1}$ and $K_{i} = K_{i-1} \cup \sigma_{i}$, where $\sigma_{i}$ is a $q$-simplex. Then $\partial \sigma_{i} \in B_{q-1}(K_{i-1})$ if and only if $\sigma_{i}$ gives birth to a degree-$q$ homology class. 
\end{lemma}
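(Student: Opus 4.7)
The plan is to exploit the fact that $\sigma_i$ is the \emph{only} new $q$-simplex and, crucially, that $\sigma_i$ cannot appear as a face in any $(q{+}1)$-simplex of $K_i$. The latter holds because $K_i$ and $K_{i-1}$ share the same $(q{+}1)$-simplices, and if any $(q{+}1)$-simplex $\tau \in K_{i-1}$ had $\sigma_i$ as a face, then closure under subsets would force $\sigma_i \in K_{i-1}$, a contradiction. Consequently, for every $\eta \in C_{q+1}(K_i)$ the coefficient of $\sigma_i$ in $\partial \eta$ is $0$, and every $q$-chain in $K_i$ can be written uniquely as $a\sigma_i + c$ with $a \in \mathbb{Z}_2$ and $c \in C_q(K_{i-1})$.

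For the forward direction, assume $\partial \sigma_i \in B_{q-1}(K_{i-1})$. Then I pick $c \in C_q(K_{i-1})$ with $\partial c = \partial \sigma_i$ and set $z := \sigma_i + c$. By construction $\partial z = 0$, so $z \in Z_q(K_i)$. I claim $[z] \in H_q(K_i)$ is born at index $i$. First, $z \notin C_q(K_{j'})$ for any $j' < i$ because $\sigma_i \notin K_{j'}$, so no class mapping to $[z]$ exists in $H_q(K_{j'})$ via a chain-level lift. More formally, suppose $[z] = f_{i-1}^i([w])$ for some $[w] \in H_q(K_{i-1})$; then $z = w + \partial \eta$ for some $\eta \in C_{q+1}(K_i)$. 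Reading off the coefficient of $\sigma_i$ on both sides gives $1 = 0 + 0$ (using the observation above together with $w \in C_q(K_{i-1})$), a contradiction. Hence $\sigma_i$ gives birth to $[z]$.

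For the backward direction, assume $\sigma_i$ gives birth to some class $[\gamma] \in H_q(K_i)$ at index $i$. Choose a cycle representative $\gamma \in Z_q(K_i)$ and decompose $\gamma = a \sigma_i + c$ with $c \in C_q(K_{i-1})$. If $a = 0$, then $\gamma \in Z_q(K_{i-1})$, so $[\gamma] = f_{i-1}^i([\gamma]_{K_{i-1}})$ lies in the image of $f_{i-1}^i$, contradicting that $[\gamma]$ is born at $i$. Therefore $a = 1$ and $\gamma = \sigma_i + c$. Applying $\partial$ yields $0 = \partial \sigma_i + \partial c$, i.e., $\partial \sigma_i = \partial c \in B_{q-1}(K_{i-1})$.

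The only delicate point is the non-image argument in the forward direction; this is where the observation that $\sigma_i$ is absent from every $(q{+}1)$-boundary in $K_i$ does the work. Everything else is straightforward bookkeeping with the $\mathbb{Z}_2$-linear decomposition $C_q(K_i) = \langle \sigma_i \rangle \oplus C_q(K_{i-1})$.
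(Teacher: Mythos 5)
Your proof is correct and follows essentially the same route as the paper's: construct the new cycle as $\sigma_i$ plus a chain from $K_{i-1}$, and rule out its being in the image of $f_{i-1}^{i}$ by tracking the coefficient of $\sigma_i$. In fact your treatment of the key step is more explicit than the paper's, which asserts without justification that a chain supported on $K_{i-1}$ cannot be homologous to one containing $\sigma_i$; your observation that $\sigma_i$ is not a face of any $(q{+}1)$-simplex of $K_i$, so that $\sigma_i$ never appears in $\partial\eta$ for $\eta \in C_{q+1}(K_i)$, is exactly the missing justification.
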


\begin{proof}
    Suppose $\partial \sigma_{i} \in B_{q-1}(K_{i-1})$. Then we must have that $\partial \sigma_{i} = \sum_{j\in A} \partial \sigma_{j}$ where $A \subset \{0,...,i-1\}$. Then we have that $\partial \sigma_{i} - \sum_{j\in A}\partial \sigma_{j} = 0$ and thus $\partial (\sigma_{i} - \sum_{j \in A} \sigma_{j}) = 0$. Then we have that $\sigma_{i} - \sum_{j\in A}\tau_{j} + B_{q}(K_{i})$ is a homology class which is born at $i$. Note that $\sigma_{i} - \sum_{j\in A} \tau_{j} + B_{q}(K_{i})$ cannot possibly be  expressed in the form $f_{j}^{i}(\gamma + B_{q}(K_{j})) = \sigma_{i} - \sum_{j\in A}\tau_{j} + B_{q}(K_{i})$ since this would require $\gamma + \sum_{j \in A}\tau_{j} - \sigma_{i} \in B_{q-1}(K_{i})$, this cannot occur since all elements of $\gamma + \sum_{j \in A}\tau_{j}$ must be in $K_{i-1}$. Now we prove the converse, that is suppose that the addition of $\sigma_{i}$ entering the filtration gives birth to a degree-$q$ homology class $\gamma_1 + B_{q}(K_{i}) \in H_{q}(K_{i})$. We can show that $\gamma_{1}$ must be of the form $\sum_{k \in B}\sigma_{k} + \sigma_{i}$, where $B \subset \{0,...,i-1\}$. If this was not the case, i.e $\gamma_{1}$ was of the form $\sum_{k\in B}\sigma_{k}$ then we would have $\gamma_{1} + B_{q}(K_{\max(B)}) \in H_{q}(K_{\max (B)})$ meaning $f_{\max (B)}^{i}(\gamma_{1} + B_{q}(K_{\max(B)})) = \gamma_{1} + B_{q}(K_{i})$ contradicting the fact that $\gamma_1 + B_{q}(K_{i})$ was born upon the addition of $\sigma_{i}$. Since we know that $\partial (\gamma_{1}) = 0$ we have that $\partial (\sigma_{i} + \sum_{k \in B} \sigma_{k}) = 0$ which means that $ \partial \sigma_{i} = \sum_{k\in B}\partial \sigma_{k}$. Hence we have $\partial \sigma_{i} \in B_{q-1}(K_{i-1})$. 
\end{proof}

\begin{lemma}
\label{can-only-kill}
    Let $(K_{i})_{i\in \{0,...,m\}}$ be a simplex-wise filtration and consider the change in homology between $K_{i-1}$ and $K_{i} = K_{i-1} \cup \sigma_{i}$, where $\sigma_{i}$ is a $q$-simplex. Then $\partial \sigma_{i} \notin B_{q-1}(K_{i-1})$ if and only if $\sigma_{i}$ kills a degree-$(q-1)$ homology class. 
\end{lemma}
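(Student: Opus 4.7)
The plan is to prove the biconditional directly, handling the two implications separately in the spirit of the proof of Lemma \ref{can-only-birth}. The unifying observation is that $\partial\sigma_i$ always lies in $Z_{q-1}(K_{i-1})$ by $\partial^{2}=0$, so the condition $\partial\sigma_i\in B_{q-1}(K_{i-1})$ cleanly splits the two regimes and is the hinge for both directions.

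For the $(\Leftarrow)$ direction I would assume that $\sigma_i$ kills a class $[\gamma]$ born at some $j<i$. Unwinding Definition \ref{definition-of-death} yields $j'<j$ and $[\beta]\in H_{q-1}(K_{j'})$ with $f_j^i([\gamma])=f_{j'}^i([\beta])$, and the fact that $i$ is the \emph{smallest} killing index forces $\gamma+\beta\in B_{q-1}(K_i)\setminus B_{q-1}(K_{i-1})$ (otherwise the same equality would already hold at index $i-1$). Writing $\gamma+\beta=\partial c$ for some $c\in C_q(K_i)$ and splitting $c=\epsilon\sigma_i+c'$ with $c'\in C_q(K_{i-1})$ and $\epsilon\in\mathbb{Z}_2$, the option $\epsilon=0$ would put $\gamma+\beta$ back into $B_{q-1}(K_{i-1})$, a contradiction; hence $\epsilon=1$ and $\partial\sigma_i=\gamma+\beta+\partial c'$. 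Since $\gamma+\beta\notin B_{q-1}(K_{i-1})$ while $\partial c'\in B_{q-1}(K_{i-1})$, we conclude $\partial\sigma_i\notin B_{q-1}(K_{i-1})$.

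For the $(\Rightarrow)$ direction I would start from $\partial\sigma_i\notin B_{q-1}(K_{i-1})$, so $[\partial\sigma_i]$ is a nonzero class in $H_{q-1}(K_{i-1})$ whose image in $H_{q-1}(K_i)$ vanishes. To produce an \emph{explicit} class that dies at index $i$, I would pick the smallest $j\le i-1$ such that $[\partial\sigma_i]_{K_{i-1}}$ lies in $\mathrm{im}(f_j^{i-1})$ (such $j$ exists because $j=i-1$ trivially works), and choose $[\alpha]\in H_{q-1}(K_j)$ mapping to it. Minimality of $j$ forces $[\alpha]$ to be born at $j$, since any earlier preimage would, under $f_j^{i-1}$, exhibit $[\partial\sigma_i]_{K_{i-1}}$ as an image from a smaller index. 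At index $i$ we have $f_j^i([\alpha])=[\partial\sigma_i]_{K_i}=0=f_{j'}^i(0)$ for any $j'<j$, so $[\alpha]$ dies no later than $i$; and any putative death at some $k<i$ would, after applying $f_k^{i-1}$, place $[\partial\sigma_i]_{K_{i-1}}$ in the image of some $f_{j'}^{i-1}$ with $j'<j$, violating the choice of $j$. Hence $\sigma_i$ kills $[\alpha]$.

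The main obstacle is in the $(\Rightarrow)$ direction, because ``dying'' in persistence is not the same as becoming zero: a class can die by merging with an older class without ever being trivial. The elder-rule style minimality of $j$ is what pinpoints the correct class to track and simultaneously certifies that it is still alive at index $i-1$, so that its collapse upon addition of $\sigma_i$ is indeed a death at $i$ rather than at some earlier index.
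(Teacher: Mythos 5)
Your proof is correct and follows essentially the same route as the paper: both directions hinge on the observation that $\partial\sigma_i$ is always a cycle in $K_{i-1}$ that becomes a boundary in $K_i$, and your backward direction is a slightly more explicit version of the paper's argument that adjoining $\sigma_i$ enlarges $B_{q-1}$ by at most the span of $\partial\sigma_i$. In the forward direction you are in fact more careful than the paper, which simply pairs $\partial\sigma_i + B_{q-1}(K_{i-1})$ against the zero class from $K_0$ without identifying the birth index of the dying class; your elder-rule choice of the minimal $j$ with $[\partial\sigma_i]_{K_{i-1}}\in\mathrm{im}(f_j^{i-1})$ supplies exactly the bookkeeping needed to match Definition \ref{definition-of-death} verbatim.
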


\begin{proof}
    First we will show that if the addition of $\sigma_{i}$ to the filtration kills a degree-$(q-1)$ homology class then we have $\partial \sigma_{i} \notin B_{q-1}(K_{i-1})$. To this end suppose that $\sigma_{i}$ kills a degree $(q-1)$ homology class $\gamma + B_{q-1}(K_{j})$ born at index $j$. Then that must mean we have some $\beta + B_{q-1}(K_{j'})$ with $j' < j$ such that $f_{j'}^{i}(\beta + B_{q-1}(K_{j'})) = f_{j}^{i}(\gamma + B_{q-1}(K_{j}))$. Then we have that $\beta + B_{q-1}(K_{i}) = \gamma + B_{q-1}(K_{i})$ and thus we have $\beta - \gamma \in B_{q-1}(K_{i})$. By Definition \ref{definition-of-death} we also know that $\beta - \gamma \notin B_{q-1}(K_{i-1})$. Now suppose $\partial \sigma_{i} \in B_{q-1}(K_{i-1})$, then we would have $B_{q-1}(K_{i-1}) = B_{q-1}(K_{i})$, implying $\beta -\gamma \in B_{q-1}(K_{i-1})$ a contadicition. Hence $\partial \sigma_{i} \notin B_{q-1}(K_{i-1})$. Now we prove the converse, suppose now that $\partial \sigma_{i} \notin B_{q-1}(K_{i-1})$. Since $\sigma_{i}$ is a $q$-simplex it follows that $Z_{q-1}(K_{i}) = Z_{q-1}(K_{i-1})$. Since all boundaries are cycles, we have $\partial \sigma_{i} \in Z_{q-1}(K_{i-1})$. We then have that $f_{0}^{i}(0+ B_{q-1}(K_{0})) = f_{i-1}^{i}(\partial \sigma_{i} + B_{q-1}(K_{i-1}))$ and $i$ is the lowest index such that this is the case since $f_{0}^{i-1}(0+ B_{q-1}(K_{0})) = 0 + B_{q-1}(K_{i-1}) \neq f_{i-1}^{i-1}(\partial \sigma_{i} + B_{q-1}(K_{i-1})) = \partial \sigma_{i} + B_{q-1}(K_{i-1})$
\end{proof}

\begin{lemma}
\label{can-only-birth-or-kill}
Let $(K_{i})_{i\in \{0,...,m\}}$ be a simplex-wise filtration and consider the change in homology between $K_{i-1}$ and $K_{i} = K_{i-1} \cup \sigma_{i}$, where $\sigma_i$ is a $q$-simplex.  Then one, and only one, of the following must occur. 

    \begin{itemize}
        \item $\sigma_i$ kills a homology class of degree $q-1$

        \item $\sigma_i$ gives birth to a homology class of degree $q$. 
    \end{itemize}
\end{lemma}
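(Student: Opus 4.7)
The plan is to deduce this immediately from Lemmas \ref{can-only-birth} and \ref{can-only-kill}, which together form a dichotomy keyed on whether $\partial \sigma_i$ lies in $B_{q-1}(K_{i-1})$ or not. Since for any fixed $\sigma_i$ the condition $\partial \sigma_i \in B_{q-1}(K_{i-1})$ is either true or false, exactly one of the two previous lemmas applies, giving the "one and only one" conclusion.

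Concretely, I would first observe that $\partial \sigma_i$ is a well-defined element of $C_{q-1}(K_{i-1})$ (since all proper faces of $\sigma_i$ already lie in $K_{i-1}$ by the simplicial complex axiom), so the question of whether $\partial \sigma_i \in B_{q-1}(K_{i-1})$ makes sense. Then I would split into the two cases. In the case $\partial \sigma_i \in B_{q-1}(K_{i-1})$, Lemma \ref{can-only-birth} gives that $\sigma_i$ gives birth to a degree-$q$ class, and the contrapositive of Lemma \ref{can-only-kill} (which is a biconditional) rules out killing a $(q-1)$-class. In the case $\partial \sigma_i \notin B_{q-1}(K_{i-1})$, Lemma \ref{can-only-kill} gives that $\sigma_i$ kills a degree-$(q-1)$ class, and the contrapositive of Lemma \ref{can-only-birth} rules out birth.

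There is essentially no obstacle here: the real work has already been done in establishing that both Lemmas \ref{can-only-birth} and \ref{can-only-kill} are biconditionals. The only thing to be careful about is to explicitly invoke both directions of each biconditional so that the exclusivity (not merely existence) of one of the two outcomes is justified. This is a short formal argument that amounts to "$P \Leftrightarrow$ birth" and "$\neg P \Leftrightarrow$ kill" implies exactly one of birth or kill occurs.
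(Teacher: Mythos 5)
Your proposal is correct and follows exactly the paper's own argument: both rest on the dichotomy of whether $\partial\sigma_i \in B_{q-1}(K_{i-1})$ and then invoke the biconditionals of Lemmas \ref{can-only-birth} and \ref{can-only-kill}. Your explicit use of the contrapositives to justify exclusivity is a slightly more careful rendering of the same idea.
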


\begin{proof}
    Either $\partial \sigma_{i} \in B_{q-1}(K_{i-1})$ or $\partial \sigma_{i} \notin B_{q-1}(K_{i-1})$. Only one of these statements can be true and one of these statements must be true. By Lemma \ref{can-only-birth} the case  $\partial \sigma_{i} \in B_{q-1}(K_{i-1})$ corresponds to a birth of a degree-$q$ homology class and the case $\partial \sigma_{i} \notin B_{q-1}(K_{i-1})$ corresponds to a death of a degree $q-1$ homology class by Lemma \ref{can-only-kill}. 
\end{proof}

\begin{definition}[Persistence pair]
Consider a simplex-wise filtration $(K_{i})_{i\in \{0,...,m\}}$. If $[\gamma] \in H_{q}(K_{i})$ was born at index $i$ and died at index $j$ then $(i,j)$ is said to be a degree-$q$ persistence pair. We may also say that $(\sigma_{i}, \sigma_{j})$ is a degree-$q$ persistent pair when we want to use the simplicies themselves to denote the persistence pair rather than indices. If there is no confusion as to what the value of $q$ is, sometimes we will refer to $(i,j)$ simply as a persistence pair. 
\end{definition}

In section~\ref{section-distilled-vietoris-rips-complex}  we will make use a special type of persistence pair. The notion of apparent pairs was defined in \cite{Ripser}. They have also been referred to as ``close pairs'' in~\cite{Delgado-Friedrichs2015} and as ``shallow pairs" in \cite{edelsbrunner2024posetcancellationsfilteredcomplex}. 

\begin{definition}[Apparent pair]
    Consider a simplex-wise filtration $(K_{i})_{i\in \{0,...,m\}}$ Then a persistent pair $(\tau, \sigma)$ is called an apparent pair if the following two conditions both hold true. 

    \begin{itemize}
        \item Out of all faces of $\sigma$, $\tau$ is the face that appears the latest in the filtration. 

        \item Out of all cofaces of $\tau$, $\sigma$ is the face that appears earliest in the filtration. 
    \end{itemize}
    
\end{definition}

\subsection{Vietoris-Rips complexes and filtrations}

In this section we briefly discuss one of the main structures of interest for this paper. Before doing so, we define a measure of size for a simplex. 

\begin{definition}[Diameter of a simplex]
    Consider a finite point set $A \in \mathbb{R}^D$. Then the diameter $A$, denoted $\diam (A)$ is defined as $\max_{x,y\in A} d(x, y)$. For a simplex $\sigma = \langle x_{0}...x_{p}\rangle $, we have $\diam (\sigma) := \max_{i,j \in \{0,...,p\}}d(x_{i}, x_{j})$. 
\end{definition}

Vietoris-Rips complexes first appeared in \cite{vietoris1927hoheren} and were originally called Vietoris complexes. Vietoris-Rips complexes are also often referred to as Rips complexes in the TDA literature.

\begin{definition}[Vietoris-Rips complex at scale $r$]
Let $X$ be a finite metric space. For $r\in [0,\infty)$ we construct the Vietoris-Rips complex at scale $r$, $\vr _{r} (X)$, as follows. If $\{x_0,...,x_p\} \subset X$ is such that $\diam(\{x_0,...,x_p\}) \leq r$ then $\langle x_0...x_p \rangle $ is a $p$-simplex in $\vr_{r} (X)$. 
\end{definition}

\begin{remark}
    Throughout this paper we will frequently state that $X$ is to be a finite metric space, however we never require the triangle inequality. Thus one could replace the requirement that $X$ be a finite metric space to that $X$ be a finite semi-metric space. 
\end{remark}

In order to construct a filtration of simplicial complexes, we state an extremely easy to prove lemma without proof. 

\begin{lemma}
    \label{vr-is-actually-a-filtration}
    Let $X$ be a finite metric space and let $0 \leq r_1 \leq r_2$. Then we have $\vr_{r_1}(X) \subseteq \vr_{r_2}(X)$. 
\end{lemma}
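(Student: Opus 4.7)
The plan is to unpack the definition of the Vietoris--Rips complex at scale $r$ and show containment simplex by simplex. Take an arbitrary simplex $\sigma = \langle x_0 \ldots x_p\rangle \in \vr_{r_1}(X)$. By definition this means $\diam(\{x_0,\ldots,x_p\}) \leq r_1$. Since the hypothesis gives $r_1 \leq r_2$, we immediately get $\diam(\{x_0,\ldots,x_p\}) \leq r_2$, so $\sigma \in \vr_{r_2}(X)$ by the very same defining condition applied at scale $r_2$.

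This shows every simplex of $\vr_{r_1}(X)$ lies in $\vr_{r_2}(X)$, i.e. $\vr_{r_1}(X) \subseteq \vr_{r_2}(X)$ as sets of simplices, which is the required inclusion of simplicial complexes.

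There is no real obstacle here; the whole argument is a one-line transitivity of the inequality $\diam(\sigma) \leq r_1 \leq r_2$. The only mild subtlety worth mentioning is that the containment is literally between the underlying sets of simplices (both complexes share the vertex set $X$), so no identification of vertices or combinatorial bookkeeping is needed. This is exactly why the authors state it without proof.
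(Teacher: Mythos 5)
Your proof is correct and is exactly the intended argument: the paper states this lemma without proof precisely because the containment follows immediately from $\diam(\sigma) \leq r_1 \leq r_2$ applied to the defining condition of $\vr_r(X)$. Nothing to add.
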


\begin{definition}[Vietoris-Rips filtration]
    The Vietoris-Rips filtration on $X$ is the nested collection of spaces   $\vr _{\bullet} (X) := \{\vr_{r_1}(X)\subseteq \vr_{r_2}(X)\}_{0 \leq r_1 \leq r_2}$. 
\end{definition}

\begin{remark}
    $\vr_{\infty}(X)$ will consist of the $(|X|-1)$-simplex spanning  all points of $X$ and all lower-degree faces. It is the simplicial complex built from the power set (i.e., the set of all subsets, denoted $2^{X}$) of $X$. 
\end{remark}

The Vietoris-Rips filtration is not a simplex-wise filtration, but is easily modified to be so. We follow the method described in \cite{Ripser}. 
We first extend the function $\psi: X \rightarrow \{1,...,|X|\}$ which indexes the vertices, to a function that labels each simplex in $\vr_{\infty}(X)$. 

\begin{definition}[Extension of $\psi$]
\label{extention-of-psi}
    Given $\psi: X \rightarrow \{1,...,|X|\}$ we extend its domain and range to $\psi: \vr_{\infty}(X)\rightarrow 2^{\{1,...,|X|\}}$ in the following fashion. Consider a simplex $\sigma = \langle x_{0}...x_{p}\rangle$, then $\psi(\sigma)$ is the set of vertex labels $\{\psi(x_{0}),...,\psi(x_{p})\}$.
\end{definition}

Next, we define a function that sorts the integer labels in an element of $2^{\{1,...,|X|\}}$ so they are listed in increasing order. 

\begin{definition}
\label{sort-definition}
Let $A(n)$ be the set of \emph{ordered} subsets of $\{1,...,n\}$. 
That is, $S = \{s_1, \ldots, s_k\} \subset \{1,...,n\} $ is in $A(n)$ if and only if $s_1 < s_2 < \cdots < s_k$. 
We write $\sort(S)$ for the function that maps a set of integers to its ordered version. 
To shorten notation we will also write $\sort(\sigma)$ when we really mean $\sort(\psi(\sigma))$. 

\end{definition}

We now ``stretch out'' the Vietoris-Rips filtration and turn it into a simplex-wise filtration by using a length-lexicographic ordering on simplices with the same diameter. 

\begin{definition}
    \label{binary-relation-on-simplices}
    We define a binary relation $ <$ on $\vr_{\infty}(X)$ as follows. 

\begin{itemize}
    \item $\sigma <\tau$ if $\diam (\sigma) < \diam(\tau)$. 

    \item If $\diam(\sigma) = \diam(\tau)$ then $\sigma < \tau$ if $\dim (\sigma) < \dim (\tau)$. 

    \item If $\diam(\sigma) = \diam(\tau)$ and $\dim(\sigma) = \dim(\tau)$ then $\sigma < \tau$ if $\sort (\sigma) <_{lex} \sort(\tau)$ according to lexicographical order, $<_{lex}$. 
\end{itemize}
\end{definition}

Recall lexicographical ordering on elements of $A(n)$ with the same cardinality is defined as follows. 
Given $S, T \in A(n)$ we have $S = \{s_1 < s_2 \cdots < s_k\}$ and $T = \{t_1 < t_2 \cdots < t_k\}$. Then $S <_{lex} T$ in lexicographic ordering if there is some $1 \leq j \leq k$ such that $s_i = t_i$ for $i < j$, and $s_j < t_j$.   

The following lemma can be readily verified as length-lexicographic ordering is known to be a total order for finite sequences. The fact that $<$ defines a total order will be used to define what will be called the ``simplex-wise Vietoris-Rips filtration". 

\begin{lemma}
    The binary relation $<$ given in Definition \ref{binary-relation-on-simplices} is a total order on $\vr_{\infty}(X)$. 
\end{lemma}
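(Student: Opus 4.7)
The plan is to view $<$ as a length-lexicographic order induced by an injective encoding of simplices into a product of totally ordered sets. I would define
\[
\Phi : \vr_{\infty}(X) \to \mathbb{R}_{\geq 0} \times \mathbb{Z}_{\geq 0} \times A(|X|), \qquad \Phi(\sigma) = (\diam(\sigma),\, \dim(\sigma),\, \sort(\sigma)),
\]
and put the lexicographic order on the codomain, using the usual $<$ on $\mathbb{R}$ and on $\mathbb{Z}$ for the first two factors and $<_{lex}$ on $A(|X|)$ for the third. By inspection, the three clauses in Definition \ref{binary-relation-on-simplices} correspond exactly to the standard clauses of a lexicographic comparison, so $\sigma < \tau$ iff $\Phi(\sigma)$ is lex-smaller than $\Phi(\tau)$. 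The lemma then reduces to two standard facts: (i) a lexicographic product of total orders is a total order, and (ii) $\Phi$ is injective, so that trichotomy for $\vr_{\infty}(X)$ is not spoiled by distinct simplices collapsing to the same triple.

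For (i), irreflexivity, antisymmetry, and transitivity of a lex product are immediate once each coordinate carries a total order. The first two factors are standard. For the third, note that whenever the first two coordinates tie, $\sort(\sigma)$ and $\sort(\tau)$ are ordered tuples of the same cardinality $\dim(\sigma)+1$, so only the restriction of $<_{lex}$ to ordered $k$-subsets of $\{1,\dots,|X|\}$ (for each fixed $k$) is needed, and this is the classical lex order on finite sequences of equal length, which the paper already invokes.

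For (ii) I would use that $\psi$ is a bijection from $X$ to $\{1,\dots,|X|\}$, so $\sigma \mapsto \psi(\sigma)$ is injective on $\vr_{\infty}(X)$; composing with the tautological bijection between $2^{\{1,\dots,|X|\}}$ and $A(|X|)$ given by $\sort$, the third coordinate of $\Phi$ alone separates distinct simplices (recall that with $\mathbb{Z}_2$ coefficients we identify a simplex with its vertex set). Combining (i) and (ii), trichotomy follows: given distinct $\sigma,\tau$, the triples $\Phi(\sigma),\Phi(\tau)$ are distinct and hence ordered one way or the other under lex comparison.

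The only mild obstacle is the bookkeeping in transitivity, where one in principle has to handle the combinations of clauses that can trigger $\sigma<\tau$ and $\tau<\rho$; but in every combination the conclusion follows from transitivity of the ordering on the coordinate that first breaks a tie along the chain $\Phi(\sigma) <_{lex} \Phi(\tau) <_{lex} \Phi(\rho)$, so no genuine difficulty arises. This matches the paper's assertion that the lemma is "readily verified".
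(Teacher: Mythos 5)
Your proposal is correct and is essentially the argument the paper has in mind: the paper simply asserts the lemma is ``readily verified'' because length-lexicographic ordering of finite sequences is a total order, and your encoding $\Phi(\sigma) = (\diam(\sigma), \dim(\sigma), \sort(\sigma))$ together with the injectivity of the third coordinate is exactly the standard way to make that assertion precise. Your observation that $<_{lex}$ is only ever invoked between ordered tuples of equal length (since the dimension coordinate must tie first) correctly handles the one point where the paper's definition of $<_{lex}$ is restricted to equal cardinalities.
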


\begin{definition}[Simplex-wise Vietoris-Rips filtration]
\label{def-VR-total-order}
Let $X$ be a finite metric space. Suppose there are $m$ simplices in $\vr _{\infty} (X)$. We use the total order $<$ on $\vr_{\infty}(X)$ to construct a bijection $\phi:\vr_{\infty}(X) \rightarrow \{1,...,m\}$ by mapping the lowest element according to $<$ to $1$, the next lowest element to $2$ and so on. The filtration $(K_{i})_{i \in \{0,...,m\}}$ where $K_{0} = \emptyset$ and $K_{i} = \cup_{j=1}^{i}\phi ^{-1}(j)$ for $i>0$ is referred to as the simplex-wise Vietoris-Rips filtration on $X$.
\end{definition}

\begin{remark}
    It is customary to write $\phi^{-1}(j)$ as $\sigma_j$. Thus we write $K_{i} = \cup_{j=1}^{i}\sigma_j$
\end{remark}

\begin{definition}[Birth and death values for  Vietoris-Rips filtrations]
Consider the collection $\vr_{\bullet}(X)$ and its corresponding simplex-wise filtration. Let $(i,j)$ be a persistence pair for the simplex-wise filtration. Then $\mathrm{diam}(\sigma_i)$ is said to be the birth value of the homology class that is born when $\sigma_i$ is added and $\mathrm{diam}(\sigma_j)$ is said to be the death value of this homology class. 
\end{definition}

\begin{definition}[Persistence]
Consider $\vr_{\bullet}(X)$ with its corresponding simplex-wise filtration. Let $(i,j)$ be a persistence pair for the simplex-wise filtration. Then the persistence of $(i,j)$ is defined as $\mathrm{diam}(\sigma_j) - \mathrm{diam}(\sigma_i)$.
\end{definition}

Note that the persistence may be zero. In this case we say that the persistence pair $(i,j)$ has trivial persistence.  

\begin{definition}[Persistence barcode, $\ph{q}(X)$]
Consider a finite metric space $X$. The multiset of all left-closed, right-open intervals $\left[\mathrm{diam}(\sigma_i), \mathrm{diam}(\sigma_j)\right)$ such that $(i,j)$ is a degree-$q$ persistence pair of the simplex-wise Vietoris-Rips filtration with non-trivial persistence is called the degree-$q$ Vietoris-Rips persistence barcode of $X$. We will denote it by $\ph{q}(X)$.

\end{definition}

The persistence barcode may be visualised by drawing each interval stacked above the real line as in Figure \ref{circle_of_circle_diag}. 
An alternative visualisation is the persistence diagram, where the points $(\mathrm{diam}(\sigma_i), \mathrm{diam}(\sigma_j))$ are plotted on cartesian axes for each persistence pair $(i,j)$.

\begin{figure}
\begin{subfigure}[t]{.5\textwidth}
    \centering
    \includegraphics[scale = 0.5, clip,trim=4cm 9cm 4cm 9cm]{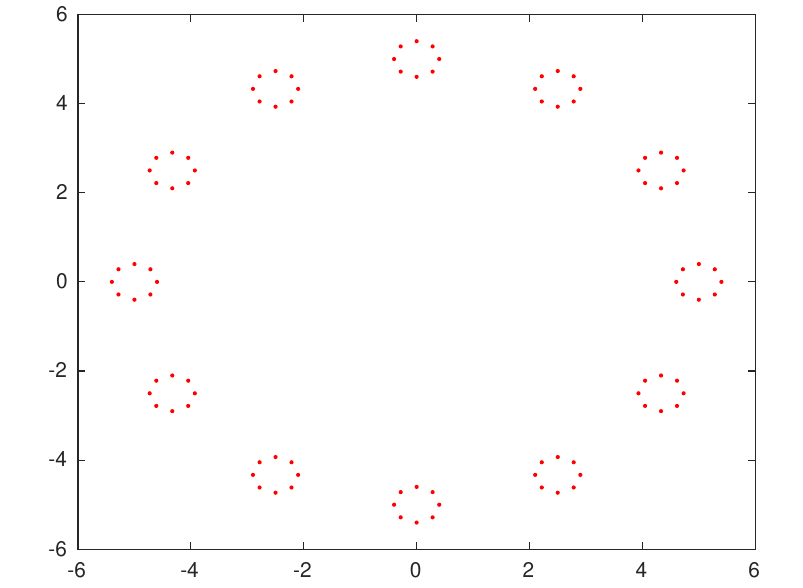}
    \subcaption{}
\end{subfigure}%
\begin{subfigure}[t]{.5\textwidth}
    \centering
\begin{tikzpicture}

\draw [|->] (0,0) -- (6.5,0);
\foreach \i in {0,2,4,6,8,10,12}
    \node [below] at (0.5*\i,0) {\i};

\foreach \i in {1,...,12}
    \draw [thick,red] (0.155,0.2*\i)--(0.4,0.2*\i);

\draw [thick, red] (0.9, 3.0) -- (4.6,3.0);
\end{tikzpicture}
 \subcaption{}
    \end{subfigure}
\caption{(a) A point cloud consisting of points on many small circles placed around a larger circle together with (b) its degree-1 persistence barcode. }
\label{circle_of_circle_diag}
\end{figure}

\subsection{Reduced Vietoris-Rips Filtrations}
In this section we recall several key concepts from \cite{koyama2024fastercomputationdegree1persistent}.

\begin{definition}[Lune]
Consider a finite metric space $X$ with its corresponding Vietoris-Rips filtration $\vr _{\bullet}(X)$. Then for a simplex $\sigma = \langle y_0....y_{p}\rangle$ we define $\lune (\sigma)$ in the following fashion. 

\begin{equation}
    \lune (\sigma) = \{x \in X \;|\; \langle xy_0...\hat{y_i}...y_{p} \rangle < \langle y_0...y_{p} \rangle \; \forall i\in \{0,...,p\} \}
\end{equation}

\end{definition}

We quantify structure in this subset by its connectivity. 

\begin{definition}[Connected components of a lune]
\label{def-connected-components-of-lune-appendix}
Consider a $q$-simplex $\sigma = \langle y_{0}...y_{q} \rangle$. Consider a graph with vertices consisting of the points in $\lune (\sigma)$. Join two points $p,q \in \lune ( \sigma )$ by an edge if $\langle pq y_{0}...\hat{y}_{i}...\hat{y}_{j}...y_{q} \rangle  < \sigma $ for all $0 \leq i < j \leq q$. Suppose this graph has $c$ connected components, then we say that $\lune (\sigma)$ has $c$ connected components. 
\end{definition}

We now describe how to build the reduced Vietoris-Rips complex. First we define a Lune function that records the selection of a single point per connected component of a lune. 

\begin{definition}[Degree-$q$ lune function]
\label{lune-function-appendix}
    Let $\vr^{q}_{\infty}(X)$ be the set of $q$-simplices in $\vr_{\infty}(X)$. We define a Lune function $L^{q}:\vr^{q}_{\infty}(X)\rightarrow 2^{X}$ as a function that takes a  $q$-simplex $\sigma$, with $c_{\sigma}$ connected components in its lune, to a set $\{x_{1},...,x_{c_{\sigma}}\}$. The points $x_{1},...,x_{c_{\sigma}}$ are chosen from the connected components of $\sigma$, one point from each connect component such that out of all points in a given component, they appear earliest in the filtration. 
\end{definition}

With the lune function defined, we are in a position to define degree-$q$ Reduced Vietoris-Rips complexes.

\begin{definition}
    \label{reduced-rips-higher-degree-homology}
    Consider a finite metric space $X$ and a lune function $L^{q}$. The degree-$q$ reduced Vietoris-Rips complex of $X$ with scale $r$, denoted $\rvr_{r}^{L^q}(X)$ is the simplicial complex consisting of 
    \begin{itemize}
        \item All $i$-simplices for $i= 0,1,...,q$

        \item ($q+1$)-simplices are as follows. 
    For each $q$-simplex $\sigma = \langle y_{0}...y_{q} \rangle \in \mathcal{R}^{L^q}_{r}(X)$ the $(q+1)$-simplices $\langle y_0...y_{q} x\rangle, x\in L^{q}(\sigma)$ are in $\mathcal{R}^{L^q}_{r}(X)$. 
    
    \end{itemize}
\end{definition}

Since $\rvr_r^{L^{q}}(X)$ is a simplicial complex and $\rvr^{L^{q}}_{r'}(X) \subset \rvr^{L^{q}}_{r}(X)$ for $r' <r$ it follows that we 
have a filtration for increasing scale $r$. 
\begin{definition} 
\label{def-reduced-vietoris-rips-filtration}
A degree-$q$ reduced Vietoris-Rips filtration on $X$ is any filtration of the form $\rvr^{L^{q}}_{\bullet}(X)$, where $L^{q}$ is a degree-$q$ lune function for $X$.
\end{definition}

We typically drop the $L^{q}$ from $\rvr_{\bullet}^{L^{q}}$ and write $\rvr_{\bullet}^{q}$ since it will be tacitly understood that there is a choice of degree-$q$ lune function. We now state a theorem from \cite{koyama2024fastercomputationdegree1persistent} which relates the persistent homology of the reduced Vietoris-Rips complex with the persistent homology of the standard Vietoris-Rips complex. 

\begin{theorem}
\label{deg-q-theorem-VR-RVR-isomorphism}
Consider a finite metric space $X$.   
Then there exists a family of isomorphisms $\theta^{q}_{\bullet}$ such that the following diagram commutes

\begin{equation}
 \begin{tikzcd}[ampersand replacement=\&]
  H_{q}(\rvr_{r_1}^{q}(X)) \arrow[r, "f_{r_1}^{r_2}"] \arrow[d, "\theta^{q}_{r_1}"'] \& H_{q}(\rvr_{r_2}^{q}(X)) \arrow[d, "\theta^{q}_{r_2}"] \\
  H_{q}(\vr_{r_1}(X)) \arrow[r, "g_{r_1}^{r_2}"] \& H_{q}(\vr_{r_2}(X))
  \end{tikzcd} 
\end{equation}

\noindent for all $r_1$ and $r_2$ such that $0 \leq  r_1 < r_2$.  Above, $f_{r_1}^{r_2}$ and $g_{r_1}^{r_2}$ are the maps at homology level induced by the natural inclusions. 
\end{theorem}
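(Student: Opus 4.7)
The natural candidate for $\theta^{q}_{r}$ is the map induced on $H_{q}$ by the simplicial inclusion $\iota_{r} : \rvr^{q}_{r}(X) \hookrightarrow \vr_{r}(X)$. Commutativity of the square is then automatic from functoriality of $H_{q}(-)$ applied to the obvious commuting square of inclusions, so the entire content reduces to showing that each $\iota_{r}$ induces an isomorphism. By Definition~\ref{reduced-rips-higher-degree-homology}, $\rvr^{q}_{r}(X)$ and $\vr_{r}(X)$ contain exactly the same $i$-simplices for $i \le q$; hence $C_{i}(\rvr^{q}_{r}(X)) = C_{i}(\vr_{r}(X))$ for $i \le q$, and in particular $Z_{q}(\rvr^{q}_{r}(X)) = Z_{q}(\vr_{r}(X))$. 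The inclusion $B_{q}(\rvr^{q}_{r}(X)) \subseteq B_{q}(\vr_{r}(X))$ is immediate, so the only remaining task reduces to the claim that for every $(q+1)$-simplex $\tau \in \vr_{r}(X)$ we have $\partial\tau \in B_{q}(\rvr^{q}_{r}(X))$.

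\textbf{Induction on the latest face.} Let $\sigma = \langle y_{0} \ldots y_{q}\rangle$ be the $<$-largest $q$-face of $\tau$, so that $\tau = \langle x\, y_{0}\ldots y_{q}\rangle$ with $x \in \lune(\sigma)$ by the definition of the lune. I would proceed by strong induction on the filtration index of $\sigma$. The base case is vacuous: the smallest $q$-simplex in the filtration has empty lune, since any point in its lune would produce a strictly smaller $q$-simplex. For the inductive step, let $x^{\ast}$ be the representative of the connected component of $x$ in $\lune(\sigma)$ chosen by $L^{q}$, so that $\tau^{\ast} := \langle x^{\ast}\, y_{0} \ldots y_{q}\rangle \in \rvr^{q}_{r}(X)$, and choose a path $x = x_{0}, x_{1}, \ldots, x_{n} = x^{\ast}$ in the connectivity graph of Definition~\ref{def-connected-components-of-lune-appendix}. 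For each edge $(x_{k}, x_{k+1})$ and each $i \in \{0, \ldots, q\}$, introduce the auxiliary $(q+1)$-simplex $\rho_{k,i} := \langle x_{k} x_{k+1} y_{0} \ldots \hat{y}_{i} \ldots y_{q}\rangle$. A direct $\mathbb{Z}_{2}$-computation, in which each cross term $\langle x_{k} x_{k+1} y_{0} \ldots \hat{y}_{i} \ldots \hat{y}_{j} \ldots y_{q}\rangle$ depends only on the unordered pair $\{i,j\}$ and therefore appears exactly twice in the double sum, gives
\[
\partial\langle x_{k}\, y_{0} \ldots y_{q}\rangle + \partial\langle x_{k+1}\, y_{0} \ldots y_{q}\rangle \;=\; \sum_{i=0}^{q} \partial \rho_{k,i}.
\]
The lune condition at $x_{k}, x_{k+1}$ and the edge condition together imply that every $q$-face of $\rho_{k,i}$ is strictly less than $\sigma$ in $<$, and (for $q \ge 1$) that $\diam(\rho_{k,i}) \le \diam(\sigma) \le r$ since every pair of vertices of $\rho_{k,i}$ lies in some $q$-face. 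Consequently $\rho_{k,i} \in \vr_{r}(X)$ and its latest $q$-face is strictly below $\sigma$ in the filtration order, so the induction hypothesis yields $\partial \rho_{k,i} \in B_{q}(\rvr^{q}_{r}(X))$. Telescoping over $k$ along the path gives $\partial\tau + \partial\tau^{\ast} \in B_{q}(\rvr^{q}_{r}(X))$, and since $\partial\tau^{\ast}$ lies there trivially, so does $\partial\tau$.

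\textbf{Main obstacle.} The bulk of the work is the bookkeeping that ensures well-foundedness: one must verify carefully that each auxiliary $\rho_{k,i}$ really belongs to $\vr_{r}(X)$ (which uses both the lune inequalities at $x_{k}, x_{k+1}$ and the edge condition of Definition~\ref{def-connected-components-of-lune-appendix} to bound $\diam(\rho_{k,i})$ by $\diam(\sigma)$), and that each of its $q$-faces is strictly earlier than $\sigma$ in the total order $<$. A secondary subtlety is the degenerate case $q = 0$: here the edge condition is vacuous and the combinatorial bound $\diam(\rho_{k,i}) \le \diam(\sigma)$ via $q$-faces breaks down, so the $q=0$ case needs to be handled separately by the standard spanning-tree description of $\ph{0}(X)$.
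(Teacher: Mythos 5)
The paper does not actually prove Theorem~\ref{deg-q-theorem-VR-RVR-isomorphism}: it is quoted verbatim from the earlier reference \cite{koyama2024fastercomputationdegree1persistent}, so there is no in-document proof to compare yours against. On its own merits, your argument is correct and self-contained for $q \geq 1$ (which is the regime the paper cares about; cf.\ the hypothesis $q>0$ in Theorem~\ref{main-theorem-of-paper}). The reduction to showing $B_{q}(\vr_{r}(X)) \subseteq B_{q}(\rvr^{q}_{r}(X))$ is valid because $C_{i}$ agrees in degrees $i \leq q$, so $Z_{q}$ agrees and surjectivity of the inclusion-induced map is free; the $\mathbb{Z}_{2}$ identity $\partial\langle x_{k}\sigma\rangle + \partial\langle x_{k+1}\sigma\rangle = \sum_{i}\partial\rho_{k,i}$ checks out (the codimension-two cross terms indeed pair off); and the well-foundedness of the induction is secured precisely because every $q$-face of $\rho_{k,i}$ is forced below $\sigma$ by the lune inequalities at $x_{k},x_{k+1}$ together with the edge condition of Definition~\ref{def-connected-components-of-lune-appendix}, which simultaneously gives $\diam(\rho_{k,i}) \leq \diam(\sigma) \leq r$. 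Two small points worth making explicit if you write this up: (i) you implicitly need $x_{k} \notin \{y_{0},\dots,y_{q}\}$ for $\rho_{k,i}$ to be a genuine $(q+1)$-simplex (this is a standing convention in the definition of $\lune$, but should be said); (ii) your flag on $q=0$ is apt --- the combinatorial diameter bound via $q$-faces genuinely fails there and the theorem should simply be read with $q \geq 1$. Your approach has the virtue of being purely chain-level and elementary, whereas one could alternatively phrase the same content via the discrete Morse machinery of Section~\ref{morse-theory-section} (pairing each omitted $(q+1)$-simplex with its latest face); the chain-level route avoids having to verify acyclicity of a matching.
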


\subsection{Discrete Morse theory background}
\label{morse-theory-section}

In this section we will review the necessary theory for discrete morse Theory. Our presentation will follow Kozlov's \emph{Organized Collapse: An introduction to Discrete Morse Theory} \cite{kozlov2021organized} but will be much more terse, restricting to only what is absolutely necessary. The reader desiring a more complete treatment of discrete morse theory is naturally directed towards \cite{kozlov2021organized}. The user interested in a more classical presentation using discrete Morse functions is directed towards \cite{forman_DMT}.

\begin{definition}[Covering in a partially ordered set]
    Consider a partially ordered set $P$ with binary operator $``<"$. Then for two elements $\sigma, \tau \in P$ we say $\sigma$ covers $\tau$ if $\tau < \sigma$ and there exists no $\nu \in P$ such that $\tau < \nu < \sigma$. If $\sigma$ covers $\tau$ then we will write $\tau \prec \sigma$
\end{definition}

\begin{example}
    Consider a point cloud $X$. Then we can view $\vr_{\infty}(X)$ as a poset with $\tau < \sigma$ if $\tau$ is a face of $\sigma$. In this case $\sigma$ covers $\tau$ if $\sigma$ is a coface of $\tau$ of codimension $1$, i.e., $\dim (\sigma) = \dim (\tau) + 1$. 
\end{example}

\begin{example}
\label{poset-example}
    We can more generally consider a simplicial complex $K$. One can view $K$ as a poset with $\tau < \sigma$ if $\tau$ is a face of $\sigma$. In this case $\sigma$ covers $\tau$ if $\sigma$ is a coface of $\tau$ of codimension $1$, i.e., $\dim (\sigma) = \dim(\tau) + 1$. 
\end{example}

We are now ready to define the notion of a ``matching". Matchings are also commonly referred to as ``discrete vector fields" in the literature. Matchings can be defined in a more general context involving posets, but for our purposes we will always use a simplicial complex with its poset structure as discussed in Example $\ref{poset-example}$.

\begin{definition}
    Consider a simplicial complex $K$. Consider a subset $M \subset K$ and a bijection $\mu: M \rightarrow M$ that satisfies the following:

    \begin{itemize}
        \item For all $\sigma \in K$, $\mu(\sigma)$ covers $\sigma$ or vice-versa

        \item $\mu(\mu(\sigma)) = \sigma$
    \end{itemize}
    We refer to $\mu:M \rightarrow M$ as a ``partial matching" of $K$. 

\end{definition}

For our purposes, we will require that $\mu$ be acyclic. We will define the notion of an acyclic matching now. 

\begin{definition}
    Consider a poset $K$. Consider a partial matching $\mu: M \rightarrow M$ of $K$. Then $\mu$ is said to be an acyclic partial matching if there does not exist $\sigma_{1},\dots, \sigma_{l}$ such that:
    \begin{equation}
    \sigma_{1} \succ \mu(\sigma_{1}) \prec \sigma_{2} \succ \mu(\sigma_{2}) \prec \dots \prec \sigma_{l} \succ \mu(\sigma_{l}) \prec \sigma_{1} 
    \end{equation}
with $l \geq 2$ and the $\sigma_{i}$ distinct. 
\end{definition}

Given a partial matching $\mu$ of $K$, we can define subsets of $K$ using $\mu$. These subsets will be used later. 

\begin{definition}
\label{def:old-basis}
    Consider a poset $K$ with a partial acyclic matching $\mu: M \rightarrow M$ of $K$. Then we define the following subsets of $K$. 

    \begin{itemize}
        \item $M^{\uparrow}$ is the subset of all $\sigma \in M$, such that $\mu(\sigma)$ is covered by $\sigma$. 

        \item $M^{\downarrow}$ is the subset of all $\sigma \in M$, such that $\mu(\sigma)$ covers $\sigma$. 

        \item $R(\mu)$ is the complement of $M$. That is to say $R(\mu) := K\setminus M$.

    \end{itemize}

    If $K$ is a simplicial complex then we will also utilize the following notion. 

    \begin{itemize}
        \item $M_{q}^{\uparrow}$ is the subset of all $q$-simplices $\sigma \in M$, such that $\mu(\sigma)$ is covered by $\sigma$. 

        \item $M_{q}^{\downarrow}$ is the subset of all $q$-simplices $\sigma \in M$, such that $\mu(\sigma)$ covers $\sigma$. 

        \item $R_{q}(\mu)$ is the set of $q$-simplices in $M$ that aren't in $M_{q}^{\uparrow}\cup M_{q}^{\downarrow}$.

    \end{itemize}
\end{definition}

The simplices in $R_q(\mu)$ are often referred to as critical cells in the discrete Morse theory literature; they act as analogues of index-$q$ critical points in smooth Morse theory.

We will need the notion of a closure function in our discussion of discrete Morse theory. 

\begin{definition}
\label{definition-gqmu}
    Consider a simplicial complex $K$ with an acyclic partial matching $\mu$. For every $q$ between $0$ and $\dim (K)$, we define the directed graph $G_{q}(\mu)$ as follows. 
    \begin{itemize}
        \item The vertices of $G_{q}(\mu)$ are indexed by the $q$-dimensional simplices of $K$. 

        \item The edges of $G_{q}(\mu)$ are given by the rule: $(\sigma, \tau)$ is an edge of $G_{q}(\mu)$ if and only if $\mu(\tau)$ is defined, and $\sigma \succ \mu(\tau)$. 
    \end{itemize}

\end{definition}

\begin{remark}
    The acyclicity of $\mu$ implies the acyclicity of $G_{q}(\mu)$ for each $q = 0,1,...,\dim (K)$. 
\end{remark}

In \cite{kozlov2021organized}, it is stated that it is a well known fact in graph theory that the vertex set of a finite acyclic directed graph $G_{q}(\mu)$ can be decomposed into layers. That is, we can represent the graph as a disjoint union $V_{0}\cup V_{1} \cup V_{2} \cup ... \cup V_{t}$. 

\begin{itemize}
    \item For any $\beta \in V_{i-1}$, there exists $\alpha \in V_{i}$ such that $(\alpha, \beta)$ is an edge of $G_{q}(\mu)$. 

    \item For any $\alpha \in V_{j}, \beta \in V_{i}$, such that $(\alpha, \beta)$ is an edge of $G_{q}(\mu)$, we have $i < j$. 
\end{itemize}

The notion of a node in a graph being ``reachable" from another node will be useful in Section \ref{section-distilled-vietoris-rips-complex}.

\begin{definition}
    Consider a simplicial complex $K$ with an acyclic partial matching $\mu$. Then for a $q$-simplex $\sigma$, another $q$-simplex $\tau$ is said to be reachable from $\sigma$ if there exists a path connecting $\sigma$ to $\tau$ in $G_{q}(\mu)$. For a given $q$-simplex $\sigma$, we denote the set of $q$-simplices reachable from $\sigma$ in $G_{q}(\mu)$ by $\reach (\sigma)$. 
\end{definition}

We are now in a position to define the closure map $\varphi_{q}$ as a map that takes a $q$-simplex to a $\mathbb{Z}_{2}$ sum of $q$-simplices.

\begin{definition}[Closure map]
    $\varphi_{q}$ is defined recursively on the sets $V_{i}$ of $G_{q}(\mu)$ starting at $V_{0}$. To begin with, we will set $\varphi_{q}(\alpha) := \alpha$ for all $\alpha \in V_{0}$. 

    Suppose $\varphi_{q}$ has been defined on $V_{0},...,V_{i-1}$ for some $ 1\leq i \leq t$. Let $(\alpha, \beta_{1}), ..., (\alpha, \beta_{m})$ be the complete list of edges emanating from $\alpha \in V_i$. We then set 
    \begin{equation}
        \varphi_{q} (\alpha) := \alpha + \sum_{j=1}^{m}\varphi_{q} (\beta_{j}). 
    \end{equation}
\end{definition}

\begin{remark}
    For a given $q$-simplex $\sigma$, we have $\supp(\varphi_{q}(\sigma)) \subset \reach(\sigma)$. We may not have equality. This is because some $q$-simplices in $\reach(\sigma)$ may become cancelled out when computing $\varphi_{q}(\sigma)$.
\end{remark}

We now describe an alternative basis for $C_{q}(K)$. This new basis will be more convenient to use for later results. 

\begin{lemma}
\label{lemma: new-basis}
    Consider the sets

    \begin{itemize}
        \item $\mathcal{B}_{q}^{R} := \{\varphi_{q}(\gamma) \; | \; \gamma \in R_{q} \},$

        \item $\mathcal{B}_{q}^{\uparrow}:= M_{q}^{\uparrow},$

        \item $\mathcal{B}_{q}^{\downarrow}:= \{\partial \beta \; | \; \beta \in M_{q+1}^{\uparrow} \}$. 
    \end{itemize}

    Then the set $\mathcal{B}_{q}^{R}\cup \mathcal{B}_{q}^{\uparrow} \cup \mathcal{B}_{q}^{\downarrow}$ is a basis for $C_{q}(K)$. 
\end{lemma}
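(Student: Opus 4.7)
The plan is to express each element of $\mathcal{B}_q^R \cup \mathcal{B}_q^\uparrow \cup \mathcal{B}_q^\downarrow$ in coordinates with respect to the standard simplex basis of $C_q(K)$ and show the resulting change-of-basis matrix is invertible. First I would run a cardinality check: since $\mu$ restricts to a bijection between $M_{q+1}^\uparrow$ and $M_q^\downarrow$, we have $|\mathcal{B}_q^\downarrow| = |M_{q+1}^\uparrow| = |M_q^\downarrow|$, while trivially $|\mathcal{B}_q^R| = |R_q|$ and $|\mathcal{B}_q^\uparrow| = |M_q^\uparrow|$. These sum to the total number of $q$-simplices, so it suffices to show linear independence, or equivalently that the change-of-basis matrix $T$ is invertible.

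Next I would order the standard basis as $R_q, M_q^\uparrow, M_q^\downarrow$ (in that order), and order the columns as $\mathcal{B}_q^R, \mathcal{B}_q^\uparrow, \mathcal{B}_q^\downarrow$, indexing $\mathcal{B}_q^\downarrow$ by $\mu(\beta) \in M_q^\downarrow$ for $\beta \in M_{q+1}^\uparrow$. Unwinding the definitions produces the block structure
\[
T = \begin{pmatrix} I & 0 & * \\ * & I & * \\ 0 & 0 & A \end{pmatrix}.
\]
The $(1,1)$ identity comes from $\varphi_q(\gamma) = \gamma + (\text{higher layer terms})$ for $\gamma \in R_q$; the $(2,2)$ identity is immediate since $\mathcal{B}_q^\uparrow = M_q^\uparrow$. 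The crucial observation for the $(3,1)$ zero block is that every target of an edge in $G_q(\mu)$ lies in $M_q^\uparrow$ (since the defining condition requires $\mu(\beta_j)$ to be defined and covered by the source), so the recursion producing $\varphi_q(\gamma)$ never visits $R_q$ or $M_q^\downarrow$ after the initial term. The $(3,2)$ zero block and $(1,2)$ zero block are immediate.

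It then remains to prove the $(3,3)$ block $A$ is invertible. Its entry in row $\mu(\beta')$ and column $\mu(\beta)$ equals the coefficient of $\mu(\beta')$ in $\partial \beta$, which is $1$ iff $\mu(\beta') \prec \beta$. The diagonal entries are $1$ because $\mu(\beta) \prec \beta$ by construction of $M_{q+1}^\uparrow$. An off-diagonal entry with $\beta' \neq \beta$ is $1$ precisely when $\beta \succ \mu(\beta')$, which is exactly the condition for $(\beta, \beta')$ to be an edge of $G_{q+1}(\mu)$. Since $\mu$ is acyclic, so is $G_{q+1}(\mu)$, hence it admits a topological ordering on $M_{q+1}^\uparrow$. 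Sorting rows and columns of $A$ consistently with this ordering makes $A$ upper triangular with $1$s on the diagonal, hence invertible over $\mathbb{Z}_2$. Therefore $T$ is invertible and the proposed set is a basis.

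The main obstacles are bookkeeping: verifying carefully that the recursive expansion of $\varphi_q(\gamma)$ really stays inside $\{\gamma\} \cup M_q^\uparrow$ (this needs the observation that edge targets in $G_q(\mu)$ always lie in $M_q^\uparrow$), and translating the alternating-chain acyclicity of $\mu$ into acyclicity of the directed graph $G_{q+1}(\mu)$ restricted to $M_{q+1}^\uparrow$. Once these points are established, the block-triangular argument gives invertibility essentially for free.
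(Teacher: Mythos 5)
The paper does not prove this lemma itself---it is quoted from Kozlov's \emph{Organized Collapse}---so there is no in-paper argument to compare against; your proof is correct and is essentially the standard one. The two load-bearing observations (that every edge target in $G_q(\mu)$ lies in $M_q^{\uparrow}$, so $\supp(\varphi_q(\gamma)) \subseteq \{\gamma\}\cup M_q^{\uparrow}$, and that acyclicity of $G_{q+1}(\mu)$ lets you topologically sort $M_{q+1}^{\uparrow}$ to make the block $A$ unitriangular) are exactly right, and your separate treatment of the diagonal entries of $A$ correctly sidesteps the fact that the relation $\mu(\beta)\prec\beta$ itself is not counted as an edge of $G_{q+1}(\mu)$.
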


We introduce some more lemmas from \cite{kozlov2021organized}. This next lemma shows that $\partial$ maps $R_{q}(\mu)$ into $R_{q-1}(\mu)$. 

\begin{lemma}
\label{lemma:action-of-phi-on-R}
    For any $\alpha \in R_{q}$, there is a set $S(\alpha) \subset R_{q-1}$, such that 

    \begin{equation}
        \partial (\varphi_{q} (\alpha)) = \sum_{\beta \in S(\alpha)}\varphi_{q} (\beta)
    \end{equation}
\end{lemma}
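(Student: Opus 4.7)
The plan is to expand $\partial \varphi_q(\alpha)$ in the basis of Lemma~\ref{lemma: new-basis},
\begin{equation*}
\partial \varphi_q(\alpha) = \sum_{\gamma \in R_{q-1}} a_\gamma\, \varphi_{q-1}(\gamma) + \sum_{\delta \in M_{q-1}^{\uparrow}} b_\delta\, \delta + \sum_{\beta \in M_q^{\uparrow}} c_\beta\, \partial \beta,
\end{equation*}
and to show that the $b_\delta$ and $c_\beta$ coefficients all vanish; then $S(\alpha) := \{\gamma : a_\gamma = 1\} \subset R_{q-1}$ meets the claim (noting that the stated $\varphi_q(\beta)$ on the right of the lemma must be $\varphi_{q-1}(\beta)$).

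The key auxiliary fact I would establish first is that for any $k \geq 1$ and any $k$-simplex $\sigma \notin M_k^{\uparrow}$, the coefficient of every $\tau \in M_{k-1}^{\downarrow}$ in $\partial \varphi_k(\sigma)$ is zero. Unrolling the recursive definition of $\varphi_k$ gives $\varphi_k(\sigma) = \sum_\eta P(\sigma \to \eta)\, \eta$ modulo $2$, where $P(\sigma \to \eta)$ counts directed paths from $\sigma$ to $\eta$ in $G_k(\mu)$. Fixing $\tau \in M_{k-1}^{\downarrow}$, the $\tau$-coefficient of $\partial \varphi_k(\sigma)$ is $\sum_{\eta \succ \tau} P(\sigma \to \eta)$. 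Every $\eta \succ \tau$ with $\eta \neq \mu(\tau)$ supplies an edge $(\eta, \mu(\tau))$ in $G_k(\mu)$ and conversely, so---using $\sigma \neq \mu(\tau)$, which holds because $\sigma \notin M_k^{\uparrow}$---decomposing paths from $\sigma$ to $\mu(\tau)$ by their penultimate vertex gives $P(\sigma \to \mu(\tau)) = \sum_{\eta \succ \tau,\, \eta \neq \mu(\tau)} P(\sigma \to \eta)$. The $\tau$-coefficient therefore reduces to $2P(\sigma \to \mu(\tau)) \equiv 0 \pmod 2$.

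To kill the $c_\beta$ I apply this at $k = q$ and $\sigma = \alpha$. Among the three sums in the expansion, $\varphi_{q-1}(\gamma)$ and $\delta$ are supported in $R_{q-1} \cup M_{q-1}^{\uparrow}$, so for each $\tau \in M_{q-1}^{\downarrow}$ the $\tau$-coefficient equation reduces to $\sum_{\beta \in M_q^{\uparrow},\ \tau \prec \beta} c_\beta = 0$. Order $M_q^{\uparrow}$ by the layer decomposition of $G_q(\mu)$ with sinks first, and use the bijection $\mu : M_q^{\uparrow} \to M_{q-1}^{\downarrow}$ to pair each column $\beta$ with the row $\mu(\beta)$. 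The diagonal entry $(\mu(\beta), \beta)$ equals $[\mu(\beta) \prec \beta] = 1$, while an off-diagonal entry at $(\mu(\beta_i), \beta_j)$ is nonzero only when $(\beta_j, \beta_i)$ is an edge of $G_q(\mu)$, forcing $\beta_j$ to a strictly higher layer than $\beta_i$ and hence $j > i$. So the matrix is upper triangular with $1$'s on the diagonal, is invertible over $\mathbb{Z}_2$, and forces $c_\beta = 0$ for every $\beta$.

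It remains to kill $b_\delta$. Applying $\partial$ to the now-simplified identity $\partial \varphi_q(\alpha) = \sum a_\gamma \varphi_{q-1}(\gamma) + \sum b_\delta\, \delta$ and using $\partial^2 = 0$ gives $\sum a_\gamma\, \partial \varphi_{q-1}(\gamma) + \sum b_\delta\, \partial \delta = 0$. Since each $\gamma \in R_{q-1}$ satisfies $\gamma \notin M_{q-1}^{\uparrow}$, the auxiliary fact at $k = q-1$ tells us $\partial \varphi_{q-1}(\gamma)$ has no $M_{q-2}^{\downarrow}$-coefficients. Reading off the $M_{q-2}^{\downarrow}$-coefficients of this identity yields $\sum_{\delta \succ \tau} b_\delta = 0$ for every $\tau \in M_{q-2}^{\downarrow}$, and the same upper-triangularity argument---now using the bijection $\mu : M_{q-1}^{\uparrow} \to M_{q-2}^{\downarrow}$ and the layer decomposition of $G_{q-1}(\mu)$---forces $b_\delta = 0$. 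I expect the main point of technical care to be the path-counting step of the auxiliary fact, in particular checking that the parity cancellation holds precisely because $\sigma \notin M_k^{\uparrow}$; the rest of the argument is essentially bookkeeping on the acyclic grading of the graphs $G_k(\mu)$.
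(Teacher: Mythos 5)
The paper does not actually prove this lemma---it is stated as imported from Kozlov's \emph{Organized Collapse} (the text immediately preceding it reads ``We introduce some more lemmas from \cite{kozlov2021organized}'')---so there is no in-paper proof to compare against; judged on its own, your argument is correct and self-contained, and you are right that the right-hand side of the displayed equation should read $\varphi_{q-1}(\beta)$. Your central device, the unrolled formula $\varphi_k(\sigma)=\sum_\eta P(\sigma\to\eta)\,\eta$ with $P$ counting directed paths in $G_k(\mu)$, is a clean global repackaging of the layer-by-layer cancellation that Kozlov runs as an induction over the $V_i$: the identity $P(\sigma\to\mu(\tau))=\sum_{\eta\succ\tau,\,\eta\neq\mu(\tau)}P(\sigma\to\eta)$ (valid exactly because $\sigma\notin M_k^{\uparrow}$ forces every path to $\mu(\tau)$ to have positive length) makes the mod-$2$ vanishing of the $M_{k-1}^{\downarrow}$-coefficients transparent, and the two unitriangular systems over $\mathbb{Z}_2$ then dispose of the $c_\beta$ and $b_\delta$ coefficients. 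Two small points worth making explicit if you write this up: the definition of $G_k(\mu)$ in the paper does not formally exclude $\sigma=\tau$, so you should note that acyclicity rules out self-loops before asserting the in-neighbourhood of $\mu(\tau)$ is $\{\eta\succ\tau:\eta\neq\mu(\tau)\}$; and the $b_\delta$ step at $q=1$ nominally involves $M_{0}^{\uparrow}$ and $M_{-1}^{\downarrow}$, which is vacuous here because the matchings in this paper never pair a vertex with the empty simplex, but deserves a sentence. What your route buys is a proof readable without the book; what it costs is nothing of substance---the bookkeeping is no heavier than the inductive original.
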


We can use the new basis in order to split the chain complex $(C_{\bullet}(K), \partial)$ into a direct sum of chain complexes. 

\begin{theorem}
\label{thm:chain-complex-decomposition}
    Given a simplicial complex $K$ and an acyclic partial matching $\mu$, we define chain subcomplexes of $C_{*}(K)$, which we call $\crit _{*}(K,\mu)$ and $\match _{*}(K,\mu)$, as follows: for each $q \geq 0$, we take

    \begin{itemize}
        \item the group $\crit _{q}(K, \mu)$ to be generated by the set $\mathcal{B}_{q}^{R}$.

        \item the group $\match _{q}(K, \mu)$ to be generated by the set $\mathcal{B}_{q}^{\uparrow} \cup \mathcal{B}_{q}^{\downarrow}$.
    \end{itemize}

    The boundary operators on $\crit_{*}(K,\mu)$ and $\match _{*}(K,\mu)$ are simply $\partial_{q}$ restricted to $\crit_{q}(K,\mu)$ and $\match_{q}(K,\mu)$. The chain complex $C_{*}(K)$ decomposes as a direct sum 

    \begin{equation}
        C_{*}(K) = \crit_{*} (K, \mu) \oplus \match_{*} (K, \mu)
    \end{equation}
\end{theorem}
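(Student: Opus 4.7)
The plan is to leverage Lemma \ref{lemma: new-basis} for the vector space decomposition and then verify that the boundary operator $\partial_q$ preserves each summand; once both are in hand, the chain complex splitting follows automatically since any direct sum of vector spaces whose differential preserves each factor is by definition a direct sum of chain complexes.

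First I would observe that Lemma \ref{lemma: new-basis} gives the decomposition at the level of vector spaces immediately. Since $\mathcal{B}_q^R \cup \mathcal{B}_q^\uparrow \cup \mathcal{B}_q^\downarrow$ is a basis for $C_q(K)$, and $\crit_q(K,\mu)$ and $\match_q(K,\mu)$ are by definition the subspaces generated by disjoint parts of this basis, we have $C_q(K) = \crit_q(K,\mu) \oplus \match_q(K,\mu)$ as $\mathbb{Z}_2$-vector spaces. So the only real content is to check that $\partial_q$ respects this splitting, i.e., that $\partial_q(\crit_q) \subseteq \crit_{q-1}$ and $\partial_q(\match_q) \subseteq \match_{q-1}$.

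For the critical subcomplex, the boundary preservation is handed to us directly by Lemma \ref{lemma:action-of-phi-on-R}. It suffices to check on generators, and for $\varphi_q(\alpha)$ with $\alpha \in R_q$ the lemma gives $\partial(\varphi_q(\alpha)) = \sum_{\beta \in S(\alpha)} \varphi_{q-1}(\beta)$ for some $S(\alpha) \subseteq R_{q-1}$, which lies in the span of $\mathcal{B}_{q-1}^R$ and hence in $\crit_{q-1}(K,\mu)$. For the matched subcomplex, I would again check on generators, splitting into the two types. For a generator $\sigma \in \mathcal{B}_q^\uparrow = M_q^\uparrow$, the boundary $\partial \sigma$ is tautologically of the form $\partial \beta$ for $\beta = \sigma \in M_q^\uparrow$, so $\partial \sigma$ is itself a generator of $\mathcal{B}_{q-1}^\downarrow$ and lies in $\match_{q-1}(K,\mu)$. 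For a generator $\partial \beta \in \mathcal{B}_q^\downarrow$ with $\beta \in M_{q+1}^\uparrow$, we have $\partial(\partial \beta) = 0$, which trivially lies in $\match_{q-1}(K,\mu)$.

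The main subtlety (and the only place where acyclicity of $\mu$ is really used) is hidden inside the invocation of Lemma \ref{lemma: new-basis} and Lemma \ref{lemma:action-of-phi-on-R}: without acyclicity the graphs $G_q(\mu)$ would not admit the layered decomposition, $\varphi_q$ would not be well-defined, and the claimed basis could collapse. Assuming those lemmas, the verification of boundary preservation is essentially a one-line check on each type of generator, and combining the vector space decomposition with the two inclusions $\partial(\crit_q)\subseteq \crit_{q-1}$ and $\partial(\match_q)\subseteq \match_{q-1}$ yields the chain complex decomposition $C_*(K) = \crit_*(K,\mu) \oplus \match_*(K,\mu)$.
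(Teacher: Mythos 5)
Your proposal is correct and follows essentially the same route as the paper's proof: the vector-space splitting comes from Lemma \ref{lemma: new-basis}, closure of $\partial$ on $\crit_{*}(K,\mu)$ comes from Lemma \ref{lemma:action-of-phi-on-R}, and closure on $\match_{*}(K,\mu)$ is the same generator-by-generator check ($\partial$ sends $\mathcal{B}_{q}^{\uparrow}$ into $\mathcal{B}_{q-1}^{\downarrow}$ and kills $\mathcal{B}_{q}^{\downarrow}$ since $\partial\circ\partial=0$). The only detail the paper adds that you omit is the boundary case $q=0$, where $\partial$ on $\mathcal{B}_{0}^{\uparrow}$ is treated separately.
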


The proof of Theorem \ref{thm:chain-complex-decomposition} can be found in \cite{kozlov2021organized}, however we reproduce it here as well as it links Definition \ref{def:old-basis}, Lemma \ref{lemma: new-basis} and Lemma \ref{lemma:action-of-phi-on-R}.

\begin{proof}
    The decomposition $C_{q}(K) = \crit_{q}(K,\mu) \oplus \match_{q}(K,\mu)$ follows from the definitions. We need to show that the boundary operator is closed with respect to the chain sub-complexes $\crit_{*}(K,\mu)$ and $\match_{*}(K,\mu)$. Lemma \ref{lemma:action-of-phi-on-R} shows that $\partial$ is closed on $\crit_{*}(K,\mu)$. To see that $\partial$ is closed on $\match_{*} (K,\mu)$ observe that $\partial$ maps all elements of $\mathcal{B}_{q}^{\downarrow}$ to $0$ for all $q \geq 0$ and $\partial$ maps $\mathcal{B}_{q}^{\uparrow}$ to $\mathcal{B}_{q-1}^{\downarrow}$ for $q \geq 1$. For $q=0$, $\partial$ maps $\mathcal{B}_{0}^{\uparrow}$ to $0$. Thus we have shown that $\partial$ is closed with respect to the chain sub-complexes $\crit_{*}(K,\mu)$ and $\match_{*}(K,\mu)$. 
\end{proof}

In \cite{kozlov2021organized}, it is shown that $M^{\uparrow}$ does not contain any cycles. We make this notion precise in the following Lemma. 

\begin{lemma}
    \label{m-up-has-no-cycles}
    Consider a simplicial complex $K$ with a partial acyclic matching $\mu$. Suppose $\sigma \in C_{q}(K)$ and $\supp(\sigma) \subset M^{\uparrow}$. If $\partial \sigma = 0$ is a cycle then $\sigma = 0$
\end{lemma}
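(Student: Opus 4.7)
The plan is to argue by contradiction. Suppose $\sigma \neq 0$, so we may write $\sigma = \sum_{i \in I} \sigma_{i}$ where $I$ is a finite non-empty index set and the $\sigma_{i}$ are distinct $q$-simplices in $M^{\uparrow}$. The goal is to exhibit an index $i^{*} \in I$ such that $\mu(\sigma_{i^{*}})$ is a face of no other $\sigma_{j}$ with $j \in I \setminus \{i^{*}\}$. Granted such an $i^{*}$, the coefficient of $\mu(\sigma_{i^{*}})$ in $\partial \sigma$ equals $1$: by the definition of $M^{\uparrow}$ we have $\mu(\sigma_{i^{*}}) \prec \sigma_{i^{*}}$, so $\mu(\sigma_{i^{*}})$ contributes $1$ to $\partial \sigma_{i^{*}}$, while by our choice of $i^{*}$ every other $\partial \sigma_{j}$ contributes $0$. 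This contradicts $\partial \sigma = 0$.

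To produce $i^{*}$, I would appeal to the acyclic matching hypothesis through the directed graph $G_{q}(\mu)$ of Definition \ref{definition-gqmu}. For $j \neq i$, an edge $(\sigma_{j}, \sigma_{i})$ in $G_{q}(\mu)$ is exactly the statement that $\mu(\sigma_{i})$ is defined and is a face of $\sigma_{j}$, so finding the desired $i^{*}$ amounts to finding a source in the subgraph of $G_{q}(\mu)$ induced on the (finite) vertex set $\{\sigma_{i}\}_{i \in I}$. A directed cycle on distinct vertices $\sigma_{i_{1}}, \ldots, \sigma_{i_{\ell}}$ in this subgraph translates directly into an alternating chain
\[
\sigma_{i_{1}} \succ \mu(\sigma_{i_{1}}) \prec \sigma_{i_{2}} \succ \mu(\sigma_{i_{2}}) \prec \cdots \prec \sigma_{i_{\ell}} \succ \mu(\sigma_{i_{\ell}}) \prec \sigma_{i_{1}}
\]
with $\ell \geq 2$ and the $\sigma_{i_{k}}$ distinct, which is forbidden by acyclicity of $\mu$. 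Hence the induced subgraph is acyclic, and because it is finite it must contain a source $\sigma_{i^{*}}$, as required. Equivalently, one can use the layer decomposition $V_{0} \cup V_{1} \cup \cdots \cup V_{t}$ of $G_{q}(\mu)$ recalled just after Definition \ref{definition-gqmu} and pick $\sigma_{i^{*}}$ in the highest-indexed layer meeting the support; since all edges of $G_{q}(\mu)$ point to strictly lower layers, no $\sigma_{j}$ in the support can send an edge into $\sigma_{i^{*}}$.

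The main obstacle is purely a matter of translation: one must carefully match the partial-matching language (``$\mu(\sigma_{i^{*}})$ is a face of $\sigma_{j}$'') with the directed-graph language (``$(\sigma_{j}, \sigma_{i^{*}})$ is an edge of $G_{q}(\mu)$''), and verify that any cycle appearing in the induced subgraph really does assemble into one of the forbidden alternating sequences. Once that identification is in hand, no further estimates or diagram chases are needed; the argument reduces to the elementary fact that a finite directed acyclic graph has a source, combined with the bijectivity of $\mu$ between $M_{q}^{\uparrow}$ and $M_{q-1}^{\downarrow}$ to guarantee that the ``witness'' face $\mu(\sigma_{i^{*}})$ is well-defined.
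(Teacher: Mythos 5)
Your argument is correct: every $\sigma_{i}$ lies in $M^{\uparrow}$ so $\mu(\sigma_{i})$ is a well-defined codimension-one face, the forbidden alternating chains correspond exactly to directed cycles among the $\sigma_{i}$ in $G_{q}(\mu)$, and a source $\sigma_{i^{*}}$ of the (finite, acyclic) induced subgraph yields a $(q-1)$-face $\mu(\sigma_{i^{*}})$ with coefficient $1$ in $\partial\sigma$, contradicting $\partial\sigma=0$. Note that the paper itself gives no proof of this lemma, deferring to Kozlov's book; your extremal-element argument is essentially the standard one found there, and you could shorten it by citing the paper's earlier remark that acyclicity of $\mu$ already implies acyclicity of $G_{q}(\mu)$.
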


An important consequence of Lemma \ref{m-up-has-no-cycles} is that $\match_{*}(K,\mu)$ is acyclic. 

\begin{lemma}
    Consider a simplicial complex $K$ with a partial acyclic matching $\mu$. Then $\match_{*}(K, \mu)$ is acyclic. 
\end{lemma}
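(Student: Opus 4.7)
The plan is to exploit the explicit basis description of $\match_q(K,\mu)$ together with Lemma \ref{m-up-has-no-cycles} to show that every cycle is a boundary. Suppose $c \in \match_q(K,\mu)$ satisfies $\partial c = 0$. Using the basis $\mathcal{B}_q^{\uparrow} \cup \mathcal{B}_q^{\downarrow}$ of $\match_q(K,\mu)$, I would uniquely decompose
\[
c = c^{\uparrow} + c^{\downarrow},
\]
where $c^{\uparrow}$ is supported on $\mathcal{B}_q^{\uparrow} = M_q^{\uparrow}$ and $c^{\downarrow}$ is a $\mathbb{Z}_2$-sum of elements of $\mathcal{B}_q^{\downarrow}$.

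From the proof of Theorem \ref{thm:chain-complex-decomposition} we already have the two boundary facts needed: $\partial$ annihilates every element of $\mathcal{B}_q^{\downarrow}$ (each such element is itself a boundary, so $\partial \circ \partial = 0$ kills it), and $\partial$ sends $\mathcal{B}_q^{\uparrow}$ into $\mathcal{B}_{q-1}^{\downarrow}$. Consequently $\partial c = \partial c^{\uparrow}$, so the hypothesis $\partial c = 0$ forces $\partial c^{\uparrow} = 0$.

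The decisive step is next: since $c^{\uparrow}$ is a $\mathbb{Z}_2$-linear combination of simplices lying in $M_q^{\uparrow} \subset M^{\uparrow}$, we have $\supp(c^{\uparrow}) \subset M^{\uparrow}$. Lemma \ref{m-up-has-no-cycles} then gives $c^{\uparrow} = 0$, hence $c = c^{\downarrow}$. Now by the very definition of $\mathcal{B}_q^{\downarrow}$, we can write $c = \sum_i \partial \beta_i$ with each $\beta_i \in M_{q+1}^{\uparrow} = \mathcal{B}_{q+1}^{\uparrow} \subset \match_{q+1}(K,\mu)$. Setting $b := \sum_i \beta_i$ yields $b \in \match_{q+1}(K,\mu)$ with $\partial b = c$, exhibiting $c$ as a boundary inside the subcomplex.

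The only real content of the argument is Lemma \ref{m-up-has-no-cycles}, which is where the acyclicity of the matching $\mu$ is used; once that lemma is available, the rest is bookkeeping against the preferred basis, and I do not anticipate any further obstacle. The mild point to keep in mind is that the preimage $b$ must live in $\match_{q+1}(K,\mu)$ rather than merely in $C_{q+1}(K)$, and this is immediate from $\mathcal{B}_{q+1}^{\uparrow} \subset \match_{q+1}(K,\mu)$.
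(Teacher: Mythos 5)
Your proof is correct and follows exactly the route the paper indicates: the paper states this lemma without proof, flagging it only as ``an important consequence of Lemma~\ref{m-up-has-no-cycles}'', and your argument supplies precisely that deduction --- decompose a cycle against the basis $\mathcal{B}_q^{\uparrow} \cup \mathcal{B}_q^{\downarrow}$, kill the $M_q^{\uparrow}$ part via Lemma~\ref{m-up-has-no-cycles}, and observe the remainder is by construction a boundary from $\match_{q+1}(K,\mu)$. No gaps; your closing remark that the preimage lies in $\match_{q+1}(K,\mu)$ and not merely in $C_{q+1}(K)$ is exactly the point worth checking, and it holds for the reason you give.
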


Using the fact that $\match_{*}(K,\mu)$ is acyclic we can relate the homology groups of $K$ to the homology groups of $\crit_{*}(K,\mu)$. 

\begin{theorem}
    \label{critical-is-everything}
    Given a simplicial complex $K$ and a partial acyclic matching $\mu$, we have the following for $q \geq 0$:

    \begin{equation}
        H_{q}(K) = H_{q}(\crit_{*}(K,\mu))
    \end{equation}
\end{theorem}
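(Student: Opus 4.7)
The plan is to combine the two immediately preceding results: the chain complex decomposition from Theorem \ref{thm:chain-complex-decomposition} and the acyclicity of $\match_*(K,\mu)$. The whole argument is very short; essentially all of the work has already been done in establishing the direct sum splitting and the no-cycle lemma.

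First I would recall Theorem \ref{thm:chain-complex-decomposition}, which gives a direct sum decomposition of chain complexes
\begin{equation}
C_{*}(K) = \crit_{*}(K,\mu) \oplus \match_{*}(K,\mu).
\end{equation}
The boundary operator respects this splitting, so cycles and boundaries split correspondingly: $Z_q(C_*(K)) = Z_q(\crit_*(K,\mu)) \oplus Z_q(\match_*(K,\mu))$ and $B_q(C_*(K)) = B_q(\crit_*(K,\mu)) \oplus B_q(\match_*(K,\mu))$. Passing to the quotient yields the standard additivity of homology for direct sums of chain complexes:
\begin{equation}
H_q(K) \;=\; H_q(\crit_*(K,\mu)) \,\oplus\, H_q(\match_*(K,\mu)).
\end{equation}

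Next I would invoke the preceding lemma, which tells us that $\match_*(K,\mu)$ is acyclic, i.e., $H_q(\match_*(K,\mu)) = 0$ for every $q \geq 0$. Plugging this into the splitting above immediately gives $H_q(K) = H_q(\crit_*(K,\mu))$, as required.

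There is essentially no obstacle to this argument; the substantive content was already established in Lemma \ref{lemma:action-of-phi-on-R} (closedness of $\partial$ on $\crit_*$) and in Lemma \ref{m-up-has-no-cycles} (which produced the acyclicity of $\match_*$). If any care is needed, it is only in verifying that the splitting of $C_*(K)$ is truly a splitting of chain complexes, i.e., that $\partial$ preserves each summand; but this is exactly what Theorem \ref{thm:chain-complex-decomposition} asserts. Thus the proof reduces to stringing these facts together and invoking additivity of homology on direct sums.
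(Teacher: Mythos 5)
Your proposal is correct and follows exactly the route the paper intends: the paper itself states this theorem immediately after establishing the direct sum decomposition $C_{*}(K) = \crit_{*}(K,\mu) \oplus \match_{*}(K,\mu)$ and the acyclicity of $\match_{*}(K,\mu)$, deferring the (standard) additivity-of-homology argument to Kozlov's text. Your write-up simply makes that implicit argument explicit, so there is nothing to add.
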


In \cite{kozlov2021organized}, it is shown that discrete Morse theory can be applied to a filtration of topological spaces $(K_{i})_{i\in I}$ where $I$ is a totally ordered set and $K_{\infty}:= \cup_{i\in I}K_{i}$ is a finite simplicial complex. We start with some preliminary definitions.

\begin{definition}
    Consider a filtration of finite simplicial complexes $(K_{i})_{i \in I}$. Let $\sigma$ be a simplex, then we define $h(\sigma)$ as the smallest value of $i$ such that $\sigma \in K_{i}$. 
\end{definition}

\begin{remark}
    In the context of Vietoris-Rips persistent homology, for a given simplex $\sigma$, $h(\sigma) = \diam(\sigma)$ as $\diam(\sigma)$ is the smallest value of $r$ such that $\sigma \in \vr_{r}(X)$. 
\end{remark}

In order to apply discrete Morse theory to persistent homology, we require $\mu$ to respect the filtration. 

\begin{definition}
    Consider a partial acyclic matching $\mu$ on $K_{\infty} := \cup_{i\in I}K_{i}$. Then $\mu$ is said to respect the filtration $K_{i}$ if $h(\mu(\sigma)) = h(\sigma)$ for all $\sigma \in K_{\infty}$ such that $\mu(\sigma)$ is defined. 
\end{definition}

From the filtration $(K_{i})_{i\in I}$ we can construct an analogous chain complex.

\begin{definition}
    Consider a filtration $(K_{i})_{i\in I}$ and $q > 0$ and a partial acyclic matching $\mu$ on $K_{\infty}$. Then for each $i \in I$ we define a chain complex $\crit_{*}(K_{\infty}, \mu)_{i}$ by defining $\crit_{q}(K_{\infty}, \mu)_{i}:= \crit_{q}(K_{i}, \mu)$. 
\end{definition}

We are now ready to give a theorem relating the degree-$q$ persistent homology of $(K_{i})_{i\in I}$ to the chain complexes $(\crit_{*}(K_{i}, \mu))_{i\in I}$. 

\begin{theorem}
\label{morse-crit-isomorphism}
    Consider a filtration of finite simplicial complexes $(K_{i})_{i\in I}$ with $i$ a totally ordered set. Then we have a family of isomorphisms $\theta_{\bullet}^{q}$ such that the following diagram commutes for all $i < j$ in $I$ and all $q \geq 0$. 

\begin{equation}
 \begin{tikzcd}[ampersand replacement=\&]
  H_{q}(\crit_{q}(K_{i}, \mu)) \arrow[r, "f_{i}^{j}"] \arrow[d, "\theta^{q}_{i}"'] \& H_{q}(\crit_{q}(K_{j}, \mu)) \arrow[d, "\theta^{q}_{j}"] \\
  H_{q}(K_{i}) \arrow[r, "g_{i}^{j}"] \& H_{q}(K_{j})
  \end{tikzcd} 
\end{equation}

Here $f_{i}^{j}$ is the map at homology level induced by the inclusion $\crit_{q}(K_{i},\mu) < \crit_{q}(K_{j},\mu)$ as groups. $g_{i}^{j}$ is the map at homology level induced by the inclusion $K_{i} \subset K_{j}$. The map $\theta_{i}^{q}$ is the map at homology level induced by sending $\varphi_{q}(\sigma)$ in $\crit_{q}(K_{i},\mu)$ to $C_{q}(K_{i})$, $\theta_{j}^{q}$ is defined analogously. 

\end{theorem}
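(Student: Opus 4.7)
The plan is to bootstrap Theorem \ref{critical-is-everything} to the filtered setting by checking that the basis $\mathcal{B}_q^R$ for $\crit_q$ behaves well under the inclusions $K_i \subset K_j$, and then argue that the already-given isomorphism of Theorem \ref{critical-is-everything} is natural in $i$.

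\textbf{Step 1: Restriction of $\mu$ to each $K_i$.} First I would verify that the filtration-respecting hypothesis $h(\mu(\sigma)) = h(\sigma)$ forces the matching $\mu$ to restrict to an acyclic partial matching on each $K_i$. Indeed, if $\sigma \in K_i$ and $\mu(\sigma)$ is defined, then $\mu(\sigma) \in K_{h(\mu(\sigma))} = K_{h(\sigma)} \subseteq K_i$, so $\mu$ never escapes $K_i$. Acyclicity and the $M^\uparrow, M^\downarrow, R$ decomposition (Definition \ref{def:old-basis}) then restrict to $K_i$.

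\textbf{Step 2: Compatibility of closure chains.} Next I would show that for a critical $q$-simplex $\sigma \in R_q(\mu) \cap K_i$, the closure chain $\varphi_q(\sigma)$ is the same whether one computes it in $K_i$ or in $K_j$ for $j \geq i$, and in particular its support lies in $K_i$. The argument is by induction along the layer decomposition $V_0 \cup V_1 \cup \cdots$ of $G_q(\mu)$: if $(\alpha, \beta)$ is an edge with $\alpha \in K_i$, then $\mu(\beta) \prec \alpha$ gives $\mu(\beta) \in K_i$, and then $h(\beta) = h(\mu(\beta)) \leq h(\alpha)$ forces $\beta \in K_i$. So the recursive formula for $\varphi_q$ never leaves $C_q(K_i)$, and the chain-complex inclusion $\crit_*(K_i, \mu) \hookrightarrow \crit_*(K_j, \mu)$ commutes strictly with the inclusion $C_*(K_i) \hookrightarrow C_*(K_j)$.

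\textbf{Step 3: Definition of $\theta^q_i$ and commutativity.} Define $\theta^q_i$ as the map on homology induced by the chain map $\crit_*(K_i, \mu) \hookrightarrow C_*(K_i)$ sending $\varphi_q(\sigma) \mapsto \varphi_q(\sigma)$. By Theorem \ref{critical-is-everything} applied to $K_i$ with the restricted matching (which is legitimate by Step 1), this is an isomorphism on homology. The square in the statement of the theorem is obtained by applying $H_q(-)$ to the strictly commuting square of chain inclusions
\[
\begin{tikzcd}[ampersand replacement=\&]
\crit_*(K_i,\mu) \arrow[r, hook] \arrow[d, hook] \& \crit_*(K_j,\mu) \arrow[d, hook] \\
C_*(K_i) \arrow[r, hook] \& C_*(K_j)
\end{tikzcd}
\]
established in Step 2; functoriality of $H_q$ delivers the commutativity of the theorem's diagram.

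\textbf{Main obstacle.} The technical heart of the argument is Step 2: one must rule out the possibility that the graph $G_q(\mu)$ contains edges that leak from $K_j \setminus K_i$ back into $K_i$ when we try to compute $\varphi_q$ on a critical simplex of $K_i$. The filtration-respecting hypothesis is exactly what prevents this, because it forces a matched pair to enter the filtration simultaneously. Once this compatibility is nailed down, the rest reduces to bookkeeping with Theorem \ref{critical-is-everything} and the functoriality of homology.
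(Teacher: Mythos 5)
The paper states Theorem \ref{morse-crit-isomorphism} without proof, citing it as background from Kozlov, so there is no in-paper argument to compare against; your proposal is correct and is exactly the intended construction, since the paper itself defines $\theta_i^q$ as the map induced by including $\varphi_q(\sigma)$ into $C_q(K_i)$. Your Step 2 is the right technical heart: the filtration-respecting hypothesis $h(\mu(\beta)) = h(\beta)$ together with the fact that $\mu(\beta) \prec \alpha$ is a face of $\alpha$ (hence $h(\mu(\beta)) \leq h(\alpha)$) keeps every edge of $G_q(\mu)$ emanating from a simplex of $K_i$ inside $K_i$, so $\varphi_q$ is computed identically in $K_i$ and $K_j$, the square of chain inclusions commutes strictly, and functoriality of $H_q$ plus Theorem \ref{critical-is-everything} applied to each $K_i$ finishes the argument.
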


\section{The Distilled Vietoris-Rips Complex}
\label{section-distilled-vietoris-rips-complex}

In this section we will apply the Discrete Morse Theory from Section \ref{morse-theory-section} to the degree-$q$ Reduced Vietoris-Rips complex. We begin by stating a fact about apparent pairs from \cite{Ripser}. 

\begin{lemma}
   \label{apparent-pair-lemma}
    The apparent pairs of the simplex-wise refinement of the Vietoris-Rips filtration form an acyclic partial matching
\end{lemma}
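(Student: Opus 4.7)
The plan is to verify two things: (i) apparent pairs define a partial matching (an involution on the simplices they cover, with matched simplices related by codimension-one cover), and (ii) this matching is acyclic. Both parts will be driven by unwinding the two conditions in the definition of an apparent pair.

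For (i), the first step is well-definedness: I would show no simplex lies in two distinct apparent pairs. If $\tau$ were in apparent pairs $(\tau, \sigma_1)$ and $(\tau, \sigma_2)$, both $\sigma_1, \sigma_2$ would be the earliest coface of $\tau$, forcing $\sigma_1 = \sigma_2$; the dual argument using the "latest face" condition handles two pairs sharing the $\sigma$-slot. A simplex cannot play the $\tau$-role in one pair and the $\sigma$-role in another, because by Lemma \ref{can-only-birth-or-kill} each simplex gives birth xor kills, and the two roles in a persistence pair correspond to birth and kill respectively. The covering condition $\tau \prec \sigma$ reduces to $\dim(\sigma) = \dim(\tau) + 1$ with $\tau \subset \sigma$, which follows from $(\tau, \sigma)$ being a persistence pair together with Lemma \ref{can-only-birth-or-kill}.

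For (ii), suppose for contradiction there is a cycle
$$\sigma_1 \succ \mu(\sigma_1) \prec \sigma_2 \succ \mu(\sigma_2) \prec \cdots \prec \sigma_l \succ \mu(\sigma_l) \prec \sigma_1$$
with $l \geq 2$ and the $\sigma_i$ distinct. Writing $\tau_i := \mu(\sigma_i)$, each pair $(\tau_i, \sigma_i)$ is an apparent pair and $\sigma_{i+1}$ (indices mod $l$) is a codimension-one coface of $\tau_i$. The "earliest coface" half of the apparent-pair property then forces $\sigma_i \leq \sigma_{i+1}$ in the total order $<$ of Definition \ref{binary-relation-on-simplices}, and distinctness of the $\sigma_i$ upgrades this to $\sigma_i < \sigma_{i+1}$. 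Chaining around the cycle yields $\sigma_1 < \sigma_1$, the desired contradiction.

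The proof is mostly definitional bookkeeping; the one subtle point is recognising that only the "earliest coface" half of the definition enters the acyclicity argument, while the "latest face" half only plays a role in the uniqueness part of (i). This asymmetry is dictated by the $\uparrow/\downarrow$ structure of the cycle: each $\sigma_i$ sits in $M^{\uparrow}$ (it covers $\mu(\sigma_i)$) while each $\tau_i$ sits in $M^{\downarrow}$, so it is always the upper simplex whose minimality among cofaces must be exploited.
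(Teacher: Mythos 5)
The paper does not actually prove this lemma: it is quoted as a known fact from \cite{Ripser}, so there is no in-paper argument to compare yours against. Your proof is correct and is essentially the standard one from Bauer's Ripser paper: uniqueness of the partner follows from uniqueness of the earliest coface and latest face under the total order of Definition~\ref{binary-relation-on-simplices}; the birth/death dichotomy of Lemma~\ref{can-only-birth-or-kill} rules out a simplex occupying both slots; and acyclicity follows because, for a putative cycle with $\tau_i = \mu(\sigma_i) \prec \sigma_{i+1}$, the earliest-coface condition gives $\sigma_i \leq \sigma_{i+1}$, distinctness upgrades this to strict inequality, and chaining around the loop yields $\sigma_1 < \sigma_1$. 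One point worth spelling out rather than gesturing at: the covering relation $\tau \prec \sigma$ requires both that $\tau$ is a face of $\sigma$ (which is built into the apparent-pair definition via the ``latest face'' clause) and that $\dim(\sigma) = \dim(\tau) + 1$, which holds because in a persistence pair $\tau$ gives birth to a degree-$q$ class and $\sigma$ kills one, so their dimensions are $q$ and $q+1$ respectively. With that sentence added, the argument is complete.
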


For a given $q$-simplex $\sigma$ with non-empty lune, there is a way to construct the apparent pair it belongs to. 

\begin{lemma}
    \label{lune-apparent-pair}
    Consider a metric space $X$. Let $\tau = \langle y_{0}...y_{q} \rangle$ be a $q$-simplex in $\vr_{\infty}(X)$. Suppose $\lune(\tau) \neq \emptyset$ and let $ x \in \lune(\tau)$ be such that $\langle x \rangle$ is the $0$-simplex that appears earliest in the simplex-wise Vietoris-Rips filtration. Then $(\tau, \langle x\tau \rangle)$ is an apparent pair. 
\end{lemma}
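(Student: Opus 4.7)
The plan is to verify the two defining conditions of an apparent pair directly for $\sigma := \langle x\tau\rangle$. First I would record the basic fact that $\diam(\sigma) = \diam(\tau)$: since $x \in \lune(\tau)$, each face $\langle x y_0 \cdots \hat y_i \cdots y_q\rangle$ has diameter at most $\diam(\tau)$, which in particular forces $d(x,y_j) \le \diam(\tau)$ for every $j$, so $\sigma$ has the same diameter as $\tau$.

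For the ``latest face'' condition, the codimension-$1$ facets of $\sigma$ are $\tau$ together with the simplices $\tau_i := \langle x y_0 \cdots \hat y_i \cdots y_q\rangle$, and the hypothesis $x \in \lune(\tau)$ says precisely $\tau_i < \tau$ for every $i$. Any face of $\sigma$ of dimension strictly smaller than $q$ has diameter at most $\diam(\sigma) = \diam(\tau)$ and strictly smaller dimension than $\tau$, so the dimension tie-breaker in Definition \ref{binary-relation-on-simplices} places it before $\tau$ in the total order. Hence $\tau$ is the latest face of $\sigma$.

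For the ``earliest coface'' condition, I would first discard higher-dimensional cofaces: any coface of dimension greater than $q+1$ has diameter at least $\diam(\tau) = \diam(\sigma)$ and dimension strictly larger than $\sigma$, so it lies later in the total order. It then suffices to compare $\sigma$ against the codimension-$1$ cofaces $\sigma_w := \langle w \tau\rangle$ for $w \in X \setminus \{y_0,\ldots,y_q\}$. Diameter monotonicity gives $\diam(\sigma_w) \ge \diam(\tau)$; the strict case produces $\sigma_w > \sigma$ immediately, and the equality case reduces the comparison to the lex order on $\sort(\sigma)$ versus $\sort(\sigma_w)$. Those two sorted label sequences differ only by $\psi(x)$ being replaced by $\psi(w)$, so $\sigma \le \sigma_w$ becomes the single inequality $\psi(x) \le \psi(w)$.

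The heart of the argument, and where I expect the main obstacle to lie, is closing this lex comparison. The natural route is an auxiliary claim: any $w$ with $\diam(\sigma_w) = \diam(\tau)$ must itself lie in $\lune(\tau)$, at which point the minimality of $\psi(x)$ among labels in $\lune(\tau)$ yields $\psi(x) \le \psi(w)$, strict whenever $w \neq x$. Verifying the auxiliary claim requires showing each face $\langle w y_0 \cdots \hat y_i \cdots y_q\rangle < \tau$: diameter monotonicity bounds each such face's diameter by $\diam(\sigma_w) = \diam(\tau)$, so faces of strictly smaller diameter are automatic, and the delicate subcase is when the face has diameter exactly $\diam(\tau)$. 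In that subcase the comparison collapses to a lex inequality between $\sort(\{w,y_0,\ldots,\hat y_i,\ldots,y_q\})$ and $\sort(\tau)$ that differs in a single slot; I would handle it by identifying which pair realises $\diam(\tau)$ inside $\sigma_w$ and arguing by contradiction against the choice of $x$ as the earliest vertex in the lune, so that any offending $w$ would have already been available as an earlier candidate, contradicting minimality.
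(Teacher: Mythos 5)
Your strategy coincides with the paper's: establish $\diam(\langle x\tau\rangle)=\diam(\tau)$, dispose of the face condition and of higher-codimension cofaces via the dimension tie-break, reduce the coface condition to the codimension-one cofaces $\langle w\tau\rangle$ of equal diameter, collapse the lexicographic comparison to the single inequality $\psi(x)\le\psi(w)$, and then try to place every competitor $w$ inside $\lune(\tau)$ so that minimality of $\psi(x)$ finishes the argument. Everything up to and including that reduction is correct and matches the paper's proof.

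The gap is your final ``auxiliary claim,'' which is where the entire content of the lemma lives and which you only gesture at. The claim that $\diam(\langle w\tau\rangle)=\diam(\tau)$ forces $w\in\lune(\tau)$ is false, and the proposed repair does not close: at the index $i$ where $w$ fails the lune test you obtain $\psi(w)>\psi(y_i)$, and you would like to chain this with $\psi(x)<\psi(y_i)$; but $x\in\lune(\tau)$ only guarantees $\langle xy_0\cdots\hat y_i\cdots y_q\rangle<\tau$, which can hold because that face's \emph{diameter} is strictly below $\diam(\tau)$, in which case no inequality between $\psi(x)$ and $\psi(y_i)$ is available. A four-point configuration exhibits the failure: take labels $\psi(y_0)=1$, $\psi(w)=2$, $\psi(y_1)=3$, $\psi(x)=4$ with $d(y_0,y_1)=d(w,y_1)=1$, $d(w,y_0)=0.5$ and $d(x,y_0)=d(x,y_1)=0.9$. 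Then $\lune(\langle y_0y_1\rangle)=\{x\}$ since $\langle wy_1\rangle>\langle y_0y_1\rangle$, yet $\langle wy_0y_1\rangle$ has diameter $1$ and sorted labels $(1,2,3)<_{lex}(1,3,4)=\sort(\langle xy_0y_1\rangle)$, so $\langle x\tau\rangle$ is not the earliest coface of $\tau$. Note that the paper's own proof closes this step with the chain $\langle x\rangle<\langle y_i\rangle<\langle z\rangle$, but its first link is likewise asserted from $\langle xy_0\cdots\hat y_i\cdots y_q\rangle<\tau$ alone and therefore implicitly assumes the equal-diameter case; the subcase in which the $x$-face drops in diameter while the $w$-face does not is exactly where both your sketch and the written proof need a further idea or an adjusted tie-breaking convention in Definition \ref{binary-relation-on-simplices}.
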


\begin{proof}
    We first show that $\langle x \tau \rangle$ is the $(q+1)$-simplex containing $\tau$ that appears earliest in the filtration. Suppose for contradiction there was another $(q+1)$-simplex $\langle z \tau \rangle$ that appeared in the filtration before $\langle x \tau \rangle$. Note that since $\tau < \langle z \tau \rangle < \langle x \tau \rangle$ in the filtration it follows that $\diam (\tau) \leq \diam (\langle z \tau \rangle )\leq \diam (\langle x \tau \rangle)$. Since $x\in \lune(\tau)$ we have $\diam(\tau) = \diam(\langle x \tau \rangle)$ and thus it follows that $\diam(\langle z \tau \rangle) = \diam(\tau) = \diam(\langle x \tau \rangle)$. Since $\langle z \tau \rangle$ and $\langle x \tau \rangle$ are both $(q+1)$-simplices, have the same diameter and $\langle z \tau \rangle < \langle x \tau \rangle$  it follows that the $0$-simplex $\langle z \rangle$ appeared in the filtration before the $0$-simplex $\langle x \rangle$. Since $x\in \lune(\tau)$ it follows that $\langle x y_0...\hat{y_i}...y_{q} \rangle < \langle y_0...y_{p} \rangle \; \forall i\in \{0,...,q\}$. We now show that $\langle z y_0...\hat{y_i}...y_{q} \rangle < \langle y_0...y_{q} \rangle \; \forall i\in \{0,...,q\}$. Suppose for contradiction there existed a particular $i$ such that $\langle y_0...y_{q} \rangle <  \langle zy_0...\hat{y_i}...y_{q} \rangle$. Then since we have $\langle y_0...y_{q} \rangle <  \langle zy_0...\hat{y_i}...y_{q} \rangle < \langle z\tau \rangle$ it follows that $\diam (\langle zy_0...\hat{y_i}...y_{q} \rangle) = \diam(\tau)$. It thus follows that $\sort (\langle zy_0...\hat{y_i}...y_{q} \rangle) $ appears lexicographically after $\sort( \langle y_0...y_{q} \rangle)$. Thus it follows that $\langle z \rangle$ appears in the filtration before $\langle y_{i} \rangle$. But we also have $\langle x y_0...\hat{y_i}...y_{q} \rangle < \langle y_0...y_{q} \rangle$ which implies that $\langle x \rangle  < \langle y_{i} \rangle$. Thus we have $\langle x \rangle < \langle y_{i} \rangle < \langle z \rangle$, a contradiction. The fact that $\tau$ is the face of $\langle x \tau \rangle$ that appears latest in the filtration is a direct consequence of the fact that $x\in \lune(\tau)$ and the definition of $\lune(\tau)$. 
\end{proof}

Using the fact that the apparent pairs form an acyclic partial matching on $\vr_{\infty}(X)$ we can construct a matching on $\rvr_{\infty}^{q}(X)$.

\begin{definition}[Partial matching on the degree-$q$ reduced Vietoris-Rips complex]
    \label{deg-q-reduced-vietoris-rips-complex-matching}
    Consider the degree-$q$ Reduced Vietoris-Rips complex $\rvr_{\bullet}^{q}(X)$. We will construct a matching $\mu_{q}$ as follows: For each $q$-simplex $\sigma = \langle y_{0} ... y_{q} \rangle $ such that $L^{q}(\sigma) \neq \emptyset$ choose $x_{\sigma} \in L^{q}(\sigma)$ such that $\langle x_{\sigma} \rangle$ is the $0$-simplex which appears earliest in the filtration. Then set $\mu_{q}(\sigma) = \langle x_{\sigma} \sigma \rangle$. 
\end{definition}

Note that Lemmas \ref{apparent-pair-lemma} and \ref{lune-apparent-pair} imply that the matching given in Definition \ref{deg-q-reduced-vietoris-rips-complex-matching} is indeed acyclic. We also need to show that the matching given in Definition \ref{deg-q-reduced-vietoris-rips-complex-matching} respects the Vietoris-Rips filtration. 

\begin{lemma}
    The matching $\mu_{q}$ given in Definition \ref{deg-q-reduced-vietoris-rips-complex-matching} respects the Vietoris-Rips filtration. 
\end{lemma}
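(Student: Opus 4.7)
The plan is to show that $\diam(\mu_q(\sigma)) = \diam(\sigma)$ for every $q$-simplex $\sigma$ on which $\mu_q$ is defined, since in the Vietoris-Rips setting $h(\cdot) = \diam(\cdot)$.

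First I would fix $\sigma = \langle y_0 \ldots y_q \rangle$ with $L^q(\sigma) \neq \emptyset$ and let $x = x_\sigma$ be the chosen point, so $\mu_q(\sigma) = \langle x y_0 \ldots y_q \rangle$. Since $\sigma \subset \mu_q(\sigma)$, one direction $\diam(\mu_q(\sigma)) \geq \diam(\sigma)$ is immediate. For the reverse inequality, the key observation is that $x \in L^q(\sigma) \subset \lune(\sigma)$, so by the definition of the lune we have
\begin{equation}
\langle x y_0 \ldots \hat{y_i} \ldots y_q \rangle < \langle y_0 \ldots y_q \rangle \quad \text{for all } i \in \{0, \ldots, q\}.
\end{equation}
Because the total order on simplices orders first by diameter, this inequality forces $\diam(\langle x y_0 \ldots \hat{y_i} \ldots y_q \rangle) \leq \diam(\sigma)$ for each $i$.

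Next I would extract from this a uniform bound on the distances between $x$ and the vertices of $\sigma$. The diameter of $\langle x y_0 \ldots \hat{y_i} \ldots y_q \rangle$ is the maximum of pairwise distances among its vertices, and in particular it dominates $d(x, y_j)$ for every $j \neq i$. Letting $i$ range over $\{0, \ldots, q\}$, every index $j$ is covered by some omission (just take $i \neq j$), so $d(x, y_j) \leq \diam(\sigma)$ for all $j$. Combining this with the fact that distances internal to $\sigma$ are trivially bounded by $\diam(\sigma)$, we obtain
\begin{equation}
\diam(\mu_q(\sigma)) = \max\Bigl(\diam(\sigma),\, \max_{0 \leq j \leq q} d(x, y_j)\Bigr) = \diam(\sigma),
\end{equation}
which is the desired equality.

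Thus $h(\mu_q(\sigma)) = h(\sigma)$ whenever $\mu_q(\sigma)$ is defined, i.e., $\mu_q$ respects the Vietoris-Rips filtration. There is no real obstacle here; the argument is essentially a bookkeeping unpacking of the definition of $\lune(\sigma)$, and the only mildly subtle step is observing that varying the omitted vertex $y_i$ is what lets us bound every $d(x, y_j)$ by $\diam(\sigma)$ simultaneously.
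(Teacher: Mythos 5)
Your proof is correct and takes the same approach as the paper, which simply asserts that $\diam(\sigma) = \diam(\mu_q(\sigma))$ for any $q$-simplex $\sigma$ with nonempty lune and $q \geq 1$; you supply the details (varying the omitted vertex to bound every $d(x, y_j)$ by $\diam(\sigma)$) that the paper leaves implicit. Note only that your ``take $i \neq j$'' step requires $q \geq 1$, a restriction the paper states explicitly.
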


\begin{proof}
    For any $q$-simplex $\sigma$ with $\lune(\sigma) \neq \emptyset$ and $q \geq 1$ we have $\diam (\sigma) = \diam (\mu_{q} (\sigma))$
\end{proof}

We are now in a position to define the degree-$q$ Distilled Vietoris-Rips Complex at scale $r$. 

\begin{definition}[Degree-$q$ distilled Vietoris-Rips complex at scale $r$]
    \label{deg-q-distilled-vietoris-rips-complex-at-scale-r}
    Consider a metric space $X$. Consider the partial matching $\mu_{q}$ defined on the degree-$q$ Reduced Vietoris-Rips complex as defined in Definition \ref{deg-q-reduced-vietoris-rips-complex-matching}. Let $A$ be given by the following set 
    \begin{equation}
        A :=  \bigcup_{\{\sigma \in R_{q+1}(\mu_{q}) \}}\reach(\sigma) 
    \end{equation}
\noindent
We then define $\dvr_{r}^{q}(X)$ as follows
    \begin{equation}
        \dvr_{r}^{q}(X) := \{\nu \in A \; | \; \diam (\nu) \leq r\} \cup \{\eta \subset \nu \; | \; \nu \in A, \diam (\nu) \leq r \}
    \end{equation}

\end{definition}

It readily follows from Definition \ref{deg-q-distilled-vietoris-rips-complex-at-scale-r} that for $r_{1} < r_{2}$ we have $\dvr _{r_1}^{q} (X) \subset \dvr_{r_2}^{q}(X)$.

\begin{definition}
    We denote $\dvr_{\bullet}^{q}(X)$ as the Distilled Vietoris-Rips filtration. 
\end{definition}

We wish to apply Discrete Morse Theory to the filtration $\dvr_{\bullet}^{q}(X)$. In order to do so, we need to define a partial acyclic matching on $\dvr_{\bullet}^{q}(X)$. We do so by simply restricting $\mu_{q}$ to $\dvr_{\bullet}^{q}(X)$. The reader may be skeptical of whether restricting $\mu_{q}$ to $\dvr_{\bullet}^{q}(X)$ still respects the filtration $\dvr_{\bullet}^{q}(X)$. The next lemma shows that this is indeed the case, at least for the simplices required to compute degree-$q$ persistent homology. 

\begin{lemma}
\label{matching-is-in-simplicial-complex}
    Let $r >0$. Suppose $\alpha \in \dvr_{r}^{q}(X)$ is a $q$-simplex such that $\mu_{q}(\alpha) \in \rvr_{r}^{q}(X)$ is defined. Then $\mu_{q}(\alpha) \in \dvr_{r}^{q}(X)$.  
\end{lemma}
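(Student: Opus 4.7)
The plan is to unpack what membership of the $q$-simplex $\alpha$ in $\dvr_r^q(X)$ actually tells us, and then to extend the reachability witness by a single graph edge to catch $\mu_q(\alpha)$. Observe first that, by Definition \ref{deg-q-distilled-vietoris-rips-complex-at-scale-r}, the set $A$ consists entirely of $(q+1)$-simplices (these being the vertices of $G_{q+1}(\mu_q)$), and $\dvr_r^q(X)$ is obtained from $A$ by restricting to diameter at most $r$ and then adjoining all faces. Since $\alpha$ is a $q$-simplex, its presence in $\dvr_r^q(X)$ forces the existence of some $(q+1)$-simplex $\nu \in A$ with $\alpha \subset \nu$ and $\diam(\nu) \leq r$. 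By the very definition of $A$, there exist a critical simplex $\sigma \in R_{q+1}(\mu_q)$ and a directed path $\sigma = \nu_0 \to \nu_1 \to \cdots \to \nu_k = \nu$ in $G_{q+1}(\mu_q)$.

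Next I would append a single edge to this path so that it terminates at $\mu_q(\alpha)$. If $\nu = \mu_q(\alpha)$ there is nothing to do, so suppose otherwise. Since $\mu_q$ is an involution and $\mu_q(\alpha)$ is defined, $\mu_q(\mu_q(\alpha)) = \alpha$ is defined as well; and since $\alpha$ is a codimension-one face of the $(q+1)$-simplex $\nu$, we have $\nu \succ \alpha$. These are precisely the two conditions required by Definition \ref{definition-gqmu} for $(\nu, \mu_q(\alpha))$ to be an edge of $G_{q+1}(\mu_q)$. Prepending $\sigma \to \cdots \to \nu$ to this new edge gives a directed path from $\sigma$ to $\mu_q(\alpha)$, so $\mu_q(\alpha) \in \reach(\sigma) \subset A$.

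Finally, the diameter side is immediate: because $\mu_q$ respects the Vietoris-Rips filtration we have $\diam(\mu_q(\alpha)) = \diam(\alpha)$, and because $\alpha$ is a face of $\nu$ we have $\diam(\alpha) \leq \diam(\nu) \leq r$. Combining these with $\mu_q(\alpha) \in A$ yields $\mu_q(\alpha) \in \dvr_r^q(X)$. I do not anticipate a genuine obstacle here; the entire argument is bookkeeping around the definition of $\reach$, with the only real insight being that the involutive property of $\mu_q$ turns the covering relation $\nu \succ \alpha$ into the single extra edge in $G_{q+1}(\mu_q)$ needed to certify reachability of $\mu_q(\alpha)$ from the same critical simplex $\sigma$ that already witnessed $\nu$.
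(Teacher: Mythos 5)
Your proof is correct and follows essentially the same route as the paper's: both arguments locate a $(q+1)$-simplex $\nu\in A$ of diameter at most $r$ having $\alpha$ as a face, and then use the involution $\mu_q(\mu_q(\alpha))=\alpha$ together with the covering relation $\alpha\prec\nu$ to exhibit $(\nu,\mu_q(\alpha))$ as an edge of $G_{q+1}(\mu_q)$, placing $\mu_q(\alpha)$ in $\reach$ of the same critical simplex. Your version is in fact slightly more careful than the paper's, since you explicitly trace the path back to a critical $\sigma\in R_{q+1}(\mu_q)$ and verify the diameter condition via filtration-respect, both of which the paper leaves implicit.
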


\begin{proof}
    From the definition of $\dvr_{r}^{q}(x)$ we have that $\alpha$ is a face of a $(q+1)$-simplex $\sigma$ with $\diam(\sigma) \leq r$ that does appear in $A$. The fact that $\sigma \in \dvr_{r}^{q}(X)$ means that the simplices in $\reach(\sigma)$ appear in $\dvr_{r}^{q}(X)$. $\mu_{q}(\mu_{q}(\alpha)) = \alpha$ is certainly defined and $\mu_{q}(\mu_{q}(\alpha)) \prec \sigma$. Thus by Definition \ref{definition-gqmu} we have that $(\sigma, \mu_{q}(\alpha))$ is an edge in $G_{q+1}(\mu_{q})$.  It then follows from the definition of $\reach(\sigma)$ that $\mu_{q}(\alpha)\in \reach(\sigma)$.     
\end{proof}

Before proceeding to the proof of the main result, we prove a useful Lemma. 

\begin{lemma}
\label{lemma-same-homology-red-dist}
    Consider a finite metric space $X$ and $q >0$. Then we have

    \begin{equation}
   H_{q}(\crit_{*}(\dvr_{r}^{q}(X), \mu_{q})) = H_{q}(\crit_{*}(\rvr_{r}^{q}(X), \mu_{q})) 
    \end{equation}

This is a genuine equality and not merely an isomorphism. 
\end{lemma}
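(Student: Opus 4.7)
The plan is to show that the chain complexes $\crit_{*}(\dvr_r^q(X),\mu_q)$ and $\crit_{*}(\rvr_r^q(X),\mu_q)$ literally coincide as subcomplexes of $C_{*}(\rvr_r^q(X))$ in the degrees relevant for $H_q$; once this holds, the cycle and boundary groups agree as actual subgroups, so $H_q$ is literally equal and not merely isomorphic.

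I would begin with a structural observation about $\mu_q$. Since $\mu_q$ only pairs $q$-simplices (those with non-empty lune) with their chosen $(q+1)$-cofaces, the graph $G_q(\mu_q)$ of Definition~\ref{definition-gqmu} has no edges: an edge $(\sigma,\tau)$ with both $\sigma,\tau$ of dimension $q$ would require $\mu_q(\tau)$ to be a $(q+1)$-simplex, in which case the condition $\sigma \succ \mu_q(\tau)$ would force $\dim(\sigma)=q+2$, contradicting $\dim(\sigma)=q$. Consequently $\varphi_q$ is the identity on every $q$-simplex, and $\crit_q(K,\mu_q)$ is simply the $\mathbb{Z}_2$-span of $R_q(\mu_q)\cap K$ inside $C_q(K)$.

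Next, I would establish the equality $\crit_{q+1}(\dvr_r^q(X),\mu_q) = \crit_{q+1}(\rvr_r^q(X),\mu_q)$ as subgroups of $C_{q+1}$. The indexing set $R_{q+1}(\mu_q)\cap\{\diam\leq r\}$ is plainly identical in both complexes, so it suffices to show that for each such $\sigma$, the closure $\varphi_{q+1}(\sigma)$ is supported inside $\dvr_r^q(X)$. By definition $\varphi_{q+1}(\sigma)$ is supported in $\reach(\sigma)\subseteq A$. Because $\mu_q$ respects the filtration, and because for each edge $(\alpha,\beta)\in G_{q+1}(\mu_q)$ the $q$-simplex $\mu_q(\beta)$ is a face of $\alpha$, we have $\diam(\beta)=\diam(\mu_q(\beta))\leq\diam(\alpha)$. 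Iterating along any directed path in $G_{q+1}(\mu_q)$ yields $\diam(\nu)\leq\diam(\sigma)\leq r$ for every $\nu\in\reach(\sigma)$, so each such $\nu$ lies in $\dvr_r^q(X)$ by construction. Hence $\varphi_{q+1}(\sigma)\in C_{q+1}(\dvr_r^q(X))$, and the two $\crit_{q+1}$ subgroups coincide. As an immediate corollary, the boundary images $B_q=\partial_{q+1}(\crit_{q+1})$ agree set-theoretically in the two complexes.

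The remaining task is to verify $\crit_q(\dvr_r^q(X),\mu_q)=\crit_q(\rvr_r^q(X),\mu_q)$, i.e.\ that every critical $q$-simplex $\gamma\in R_q(\mu_q)$ with $\diam(\gamma)\leq r$ which lies in $\rvr_r^q(X)$ also lies in $\dvr_r^q(X)$; equivalently, each such $\gamma$ is a face of some $\nu\in A$ with $\diam(\nu)\leq r$. This is the step I expect to be the main obstacle: it requires showing that every empty-lune $q$-simplex is captured in the reach of a critical $(q+1)$-simplex within the filtration scale. I would approach it by combining Lemma~\ref{lune-apparent-pair} on the apparent-pair structure with the acyclicity of $\mu_q$ and the inductive structure of $\reach$, in order to exhibit, for each such $\gamma$, a critical $(q+1)$-coface (or a reach-predecessor of one) lying at the correct filtration scale and containing $\gamma$. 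Once this is in place, both $\crit_q$ and $\crit_{q+1}$ coincide as subgroups, so the cycles $Z_q$ and boundaries $B_q$ match set-theoretically, and $H_q=Z_q/B_q$ is a literal equality on the two complexes, as claimed.
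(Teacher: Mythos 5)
Your first two steps are sound and consistent with the paper: $\varphi_q$ is indeed the identity on $q$-simplices for this matching, and your diameter argument along edges of $G_{q+1}(\mu_q)$ correctly gives $\crit_{q+1}(\dvr_r^q(X),\mu_q)=\crit_{q+1}(\rvr_r^q(X),\mu_q)$ and hence equality of the boundary groups $B_q$, exactly as in the paper. The gap is in your third step, and it is not merely an unfinished computation: the intermediate claim you are aiming for, $\crit_q(\dvr_r^q(X),\mu_q)=\crit_q(\rvr_r^q(X),\mu_q)$, is false in general. A critical $q$-simplex belongs to the distilled complex only if it is a face of some $(q+1)$-simplex in $A=\bigcup_{\sigma\in R_{q+1}(\mu_q)}\reach(\sigma)$, and there is no reason for every empty-lune $q$-simplex to be so captured. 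Already for $X$ consisting of two points and $q=1$: the unique edge is critical and lies in $\rvr_r^1(X)$ for $r$ at least the interpoint distance, but $A=\emptyset$, so $\crit_1(\dvr_r^1(X),\mu_1)=0$ while $\crit_1(\rvr_r^1(X),\mu_1)\neq 0$. More generally, critical $q$-simplices that never participate in any $q$-cycle (for $q=1$, minimum spanning tree edges are typical examples) are discarded by the distilled construction by design, so no combination of Lemma~\ref{lune-apparent-pair} and acyclicity will produce the coface you are looking for.

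The lemma survives because equality of the chain groups $\crit_q$ is not needed; only equality of $Z_q$ and $B_q$ is, and the paper proves $Z_{q}(\crit_{*}(\rvr_{r}^{q}(X),\mu_{q}))\subseteq Z_{q}(\crit_{*}(\dvr_{r}^{q}(X),\mu_{q}))$ by a different device. Since $\vr_\infty(X)$ is a full simplex, every degree-$q$ homology class with $q>0$ eventually dies, so any cycle $\gamma\in Z_{q}(\crit_{*}(\rvr_{r}^{q}(X),\mu_{q}))$ satisfies $\gamma\in\partial\bigl(\crit_{q+1}(\rvr_{t}^{q}(X),\mu_{q})\bigr)=\partial\bigl(\crit_{q+1}(\dvr_{t}^{q}(X),\mu_{q})\bigr)$ for some $t\geq r$, where the last equality is your step two applied at scale $t$. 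Such a boundary is supported on faces of simplices of $A$, hence on simplices of $\dvr_{\bullet}^{q}(X)$; combined with the fact that every simplex in $\supp(\gamma)$ has diameter at most $r$, this places $\gamma$ in $Z_{q}(\crit_{*}(\dvr_{r}^{q}(X),\mu_{q}))$. In short, it is precisely the cycles, not all critical chains, that are guaranteed to survive distillation, and the ``every cycle eventually bounds'' argument is the missing idea. Your argument for the boundary groups can be kept verbatim; replace your third step with this cycle-group argument.
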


\begin{proof}
    It follows directly from the definition of $\dvr_{r}^{q}(X)$ that the critical $(q+1)$ simplices are the same. For each critical $(q+1)$-simplex $\sigma$, $\reach(\sigma)$ will be the same for both $\dvr_{r}^{q}(X)$ and $\rvr_{r}^{q}(X)$, so we have that $\crit_{q+1}(\dvr_{r}^{q}(X), \mu_{q}) = \crit_{q+1}(\rvr_{r}^{q}(X), \mu_{q})$ which implies $B_{q}(\crit_{*}(\dvr_{r}^{q}(X), \mu_{q})) = B_{q}(\crit_{*}(\rvr_{r}^{q}(X), \mu_{q}))$. Fix $r > 0$, we will show that $Z_{q}(\crit_{*}(\dvr_{r}^{q}(X))) = Z_{q}(\crit_{*}(\rvr_{r}^{q}(X)))$. We trivially have $Z_{q}(\crit_{*}(\dvr_{r}^{q}(X))) \subset Z_{q}(\crit_{*}(\rvr_{r}^{q}(X)))$, so we now prove the reverse inclusion. Consider a cycle $\gamma$ in $\crit_{q}(\rvr_{r}^{q}(X), \mu_{q})$, since \emph{all} degree-$q$ homology classes eventually die we know that for some $t > 0$ that $\gamma \in B_{q}(\crit_{*}(\rvr_{t}^{q}(X)), \mu_{q}) = \partial (\crit_{q+1}(\rvr_{t}^{q}(X)), \mu_{q}) = \partial (\crit_{q+1}(\dvr_{t}^{q}(X)), \mu_{q})$. Since all elements of $\partial (\crit_{q+1}(\dvr_{t}^{q}(X)), \mu_{q})$ appear in $\dvr_{\bullet}^{q}(X)$ it follows that $\gamma$ is a cycle in  $\crit_{q}(\dvr_{r}^{q}(X), \mu_{q})$. Thus we have $Z_{q}(\crit_{*}(\dvr_{r}^{q}(X)), \mu_{q}) = Z_{q}(\crit_{*}(\rvr_{r}^{q}(X)),\mu_{q})$ and the proof is complete. 
\end{proof}

Now we show that the degree-$q$ persistent homology derived from the Distilled Vietoris-Rips filtration is the same as the degree-$1$ persistent homology of the Reduced Vietoris-Rips filtration which is the same as the degree-$1$ persistent homology of the standard Vietoris-Rips persistent homology. 

\begin{theorem}
    \label{main-theorem-of-paper}
    Consider a finite metric space $X$ and $q >0$. Then there exists a family of isomorphisms $\psi^{q}_{\bullet}$ such that the following diagram commutes. 
    \begin{equation}
 \begin{tikzcd}[ampersand replacement=\&]
  H_{q}(\rvr_{r_1}^{q}(X)) \arrow[r, "f_{r_1}^{r_2}"] \arrow[d, "\psi_{r_1}"'] \& H_{q}(\rvr_{r_2}^{q}((X)) \arrow[d, "\psi_{r_2}"] \\
  H_{q}(\dvr_{r_1}^{q}(X)) \arrow[r, "g_{r_1}^{r_2}"] \& H_{q}(\dvr_{r_2}^{q}(X))
  \end{tikzcd} 
\end{equation}
Here $f_{r_1}^{r_2}$ and $g_{r_1}^{r_2}$ are the maps obtained by applying $H_{q}(-)$ to the inclusions $\rvr_{r_1}^{q}(X) \subset \rvr_{r_2}^{q}(X)$ and $\dvr_{r_1}^{q}(X) \subset \dvr_{r_1}^{q}(X)$. 
\end{theorem}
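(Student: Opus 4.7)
The plan is to obtain $\psi$ by factoring both $H_q(\rvr_{r}^{q}(X))$ and $H_q(\dvr_{r}^{q}(X))$ through their respective critical chain complexes, and then gluing via the genuine equality supplied by Lemma \ref{lemma-same-homology-red-dist}. Concretely, the statement decomposes as a stack of three commutative squares sharing the same horizontal maps on the middle row: the top and bottom are instances of Theorem \ref{morse-crit-isomorphism}, and the middle is just a pair of equalities.

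First I would verify that the restriction of $\mu_q$ to $\dvr_{\bullet}^{q}(X)$ is a legitimate acyclic partial matching that respects the filtration. Acyclicity is inherited from $\mu_q$ on $\rvr_{\infty}^{q}(X)$ (any cycle in the restriction is a cycle in the larger matching). Lemma \ref{matching-is-in-simplicial-complex} guarantees that whenever $\alpha \in \dvr_r^q(X)$ and $\mu_q(\alpha)$ is defined, its image already lies in $\dvr_r^q(X)$, so the matching is well-defined on each slice. The filtration-respect condition $h(\mu_q(\alpha)) = h(\alpha)$ carries over verbatim from the argument used for $\rvr$. Hence Theorem \ref{morse-crit-isomorphism} applies and produces natural isomorphisms
\[
\theta_{r}^{\rvr,q}\colon H_q(\crit_{*}(\rvr_{r}^{q}(X),\mu_q))\xrightarrow{\;\cong\;} H_q(\rvr_{r}^{q}(X)),\qquad \theta_{r}^{\dvr,q}\colon H_q(\crit_{*}(\dvr_{r}^{q}(X),\mu_q))\xrightarrow{\;\cong\;} H_q(\dvr_{r}^{q}(X)),
\]
each commuting with the inclusion-induced maps for $r_1 < r_2$.

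Now define $\psi_{r} := \theta_{r}^{\dvr,q}\circ(\theta_{r}^{\rvr,q})^{-1}$. To show commutativity of the target square, I would stack the $\rvr$-square from Theorem \ref{morse-crit-isomorphism} on top, the equality square from Lemma \ref{lemma-same-homology-red-dist} in the middle, and the $\dvr$-square from Theorem \ref{morse-crit-isomorphism} on the bottom. The top and bottom squares commute by Theorem \ref{morse-crit-isomorphism}. The middle requires that the inclusion-induced map on critical homology for $\rvr_{r_1}^{q}\subset \rvr_{r_2}^{q}$ agrees, under the equality of Lemma \ref{lemma-same-homology-red-dist}, with the inclusion-induced map on critical homology for $\dvr_{r_1}^{q}\subset \dvr_{r_2}^{q}$.

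The main obstacle — really the only genuine content — is justifying this middle identification. It follows by unpacking the proof of Lemma \ref{lemma-same-homology-red-dist}: the critical $(q{+}1)$-simplices in $\dvr_{r}^{q}(X)$ and $\rvr_{r}^{q}(X)$ coincide, and the cycle groups $Z_{q}(\crit_{*})$ and boundary groups $B_{q}(\crit_{*})$ agree as subgroups of the common ambient chain group $C_q(\rvr_r^q(X))$. Because the inclusion-induced map on critical homology for $r_1<r_2$ is, by definition, the map on quotients $Z_q/B_q$ coming from the inclusion of the corresponding subgroups at level $r_1$ into those at level $r_2$, and because these subgroups are literally the same in the $\rvr$ and $\dvr$ cases, the two horizontal maps agree. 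Composing the three commuting squares vertically then yields the required naturality of $\psi_{\bullet}$, completing the proof.
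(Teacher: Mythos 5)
Your proposal is correct and follows essentially the same route as the paper: two applications of Theorem \ref{morse-crit-isomorphism} stacked around the equality from Lemma \ref{lemma-same-homology-red-dist}, with $\psi_r$ defined as the composite $\theta_r^{\dvr,q}\circ(\theta_r^{\rvr,q})^{-1}$ (the paper writes this as $\alpha_r^q\circ(\theta_r^q)^{-1}$). If anything, you are slightly more careful than the paper in spelling out why the middle square commutes, i.e.\ that the inclusion-induced horizontal maps agree because the cycle and boundary subgroups literally coincide inside the common ambient chain group.
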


\begin{proof}
   We know from Theorem \ref{morse-crit-isomorphism} that we have the following commutative diagrams.   
\begin{equation}
\label{cd-1}
 \begin{tikzcd}[ampersand replacement=\&]
  H_{q}(\crit_{*}(\rvr_{r_1}^{q}(X), \mu_{q})) \arrow[r, "f_{r_1}^{r_2}"] \arrow[d, "\theta^{q}_{r_1}"'] \& H_{q}(\crit_{*}(\rvr_{r_2}^{q}(X), \mu_{q})) \arrow[d, "\theta^{q}_{r_2}"] \\
  H_{q}(\rvr_{r_1}^{q}(X)) \arrow[r, "g_{r_1}^{r_2}"] \& H_{q}(\rvr_{r_2}^{q}(X))
  \end{tikzcd} 
\end{equation}
Here, $f_{r_1}^{r_2}$ and $g_{r_1}^{r_2}$ are the maps at the homology level induced by the inclusions $\crit_{q}(\rvr_{r_1}^{q}(X), \mu) < \crit_{q}(\rvr_{r_2}^{q}(X), \mu)$ and $\rvr_{r_1}^{q}(X) \subset \rvr_{r_2}^{q}(X)$, $\theta_{r_1}^{q}$ and $\theta_{r_2}^{q}$ are isomorphisms. 

\begin{equation}
\label{cd-2}
 \begin{tikzcd}[ampersand replacement=\&]
 H_{q}(\dvr_{r_1}^{q}(X)) \arrow[r, "j_{r_1}^{r_2}"] \& H_{q}(\dvr_{r_2}^{q}(X))\\
  H_{q}(\crit_{*}(\dvr_{r_1}^{q}(X), \mu_{q})) \arrow[r, "h_{r_1}^{r_2}"] \arrow[u, "\alpha^{q}_{r_1}"'] \& H_{q}(\crit_{*}(\dvr_{r_2}^{q}(X), \mu_{q})) \arrow[u, "\alpha^{q}_{r_2}"]
  \end{tikzcd} 
\end{equation}
Here, $h_{r_1}^{r_2}$ and $j_{r_1}^{r_2}$ are the maps at the homology level induced by the inclusions $\crit_{q}(\dvr_{r_1}^{q}(X), \mu) < \crit_{q}(\dvr_{r_2}^{q}(X), \mu)$ and $\dvr_{r_1}^{q}(X) \subset \dvr_{r_2}^{q}(X)$, $\alpha_{r_1}^{q}$ and $\alpha_{r_2}^{q}$ are isomorphisms. Using Lemma \ref{lemma-same-homology-red-dist} we can combine the commutative diagrams in (\ref{cd-1}) and (\ref{cd-2}) to get 

\begin{equation}
\label{cd-3}
 \begin{tikzcd}[ampersand replacement=\&]
  H_{q}(\dvr_{r_1}^{q}(X)) \arrow[r, "j_{r_1}^{r_2}"] \& H_{q}(\dvr_{r_2}^{q}(X))\\
  H_{q}(\crit_{*}(\dvr_{r_1}^{q}(X), \mu_{q})) \arrow[r, "h_{r_1}^{r_2}"] \arrow[u, "\alpha^{q}_{r_1}"'] \arrow[d,equal] \& H_{q}(\crit_{*}(\dvr_{r_2}^{q}(X), \mu_{q})) \arrow[u, "\alpha^{q}_{r_2}"] \arrow[d,equal]\\
 H_{q}(\crit_{*}(\rvr_{r_1}^{q}(X), \mu_{q})) \arrow[r, "f_{r_1}^{r_2}"] \arrow[d, "\theta^{q}_{r_1}"'] \& H_{q}(\crit_{*}(\rvr_{r_2}^{q}(X), \mu_{q})) \arrow[d, "\theta^{q}_{r_2}"] \\
  H_{q}(\rvr_{r_1}^{q}(X)) \arrow[r, "g_{r_1}^{r_2}"]\& H_{q}(\rvr_{r_2}^{q}(X))
  \end{tikzcd} 
\end{equation}

We can define $\psi_{r}^{q} = \alpha_{r}^{q} \circ (\theta_{r}^{q})^{-1}$ and thus the proof is complete. 

\end{proof}

\section{A highly parallelizable algorithm for computing \\ Vietoris-Rips persistent homology in degree-1}

In this section we use the Distilled Vietoris-Rips complex to create a highly parrallelizable algorithm for computing $\ph{1}(X)$. This algorithm first computes the filtration $\dvr_{\bullet}^{1}(X)$ and then applies standard reduction methods to the Distilled Vietoris-Rips complex. For this section, we assume $X$ is a finite metric space that has finite doubling dimension. 
Requiring finite doubling dimension ensures that the number of connected components in each lune is bounded \cite{koyama2024fastercomputationdegree1persistent}. Algorithm \ref{main-algorithm} can be extended to higher degree homologies, but will likely be too slow for practical purposes. The parallelizable part of Algorithm \ref{main-algorithm} is in the main for-loop. 

\begin{algorithm}
\caption{Algorithm to compute $\ph{1}(X)$ using $\dvr^{1}_{\bullet}(X)$}
\label{main-algorithm}
\KwData{Distance matrix $d_{X}$}
\KwResult{$\ph{1}(X)$}

 \For{$e \in \vr^{1}_{\infty}(X)$}{
    $n_e \longleftarrow$ the number of connected components of $\lune (e)$ \;
    \If{$n_e > 1$}{
        Choose one point from each of the $n_e$ connected components. Call these points $x_{1},...,x_{n_e}$. \;
        \For{$i = 2,...,n_e$}{
            Add $\reach(\langle x_{i} e \rangle)$ to $\dvr^{1}_{\bullet}(X)$. \;
            If not already in $\dvr^{1}_{\bullet}(X)$, add all faces of $\reach(\langle x_{i} e \rangle)$ to $\dvr^{1}_{\bullet}(X)$. 
        }
    }
 }

 \tcc{At this stage of the algorithm, you have $\dvr^{1}_{\bullet}(X)$}

 Compute $\ph{1}(X)$ using $\dvr^{1}_{\bullet}(X)$ \;
 
\end{algorithm}

\subsection{Complexity Analysis}
    Here we provide a complexity analysis for Algorithm \ref{main-algorithm}. The main for-loop will be run $O(n^2)$ times. For a given $e\in \mathcal{V}_{\infty}^{1}(X)$ we compute $\lune(e)$ which will have complexity $O(n)$. Computing the number of connected components of $\lune(e)$ will have a complexity of $O(n^2\log (n))$. The number of connected components is computed by first computing the MST on the points in $\lune(e)$, which has time complexity $O(n^2\log (n))$. The MST constructed on $\lune(e)$ will have $O(n)$ edges, and as a result performing a union find algorithm using this MST will have complexity $O(n\alpha (n))$, where $\alpha$ is the inverse Ackermann function. Thus the complexity of finding the number of connected components of $\lune (e)$ is dominated by $O(n^2\log(n))$. We then need the complexity of finding $\reach(\langle x_{i}e \rangle)$. This is rather difficult to estimate since we do not know the size of $\reach(\langle x_{i} e \rangle))$ a-priori to computing it. We can however provide a very crude upper bound. 
    
    Since we constructed $\mathcal{D}^{1}_{\bullet}(X)$ from $\rvr^{1}_{\bullet}(X)$ we know that the size of $\reach(\langle x_{i} e \rangle)$ is bounded above by $O(n^2)$, since there are that many $2$-simplices in $\rvr^{1}_{\bullet}(X)$. For a given $2$-simplex $\alpha$ (not necessarily in $R_{2}(\mu_{1})$), consider what needs to be computed in order to find $\beta_{1}, \beta_{2},...,\beta_{m}$ such that $(\alpha, \beta_{1}), ..., (\alpha, \beta_{m})$ is the complete list of edges emanating from $\alpha$ in $G_{2}(\mu_1)$. If $\alpha$ is in $R_{2}(\mu_1)$ then $\mu_1(\alpha)$ is not defined. Let $u(\alpha)$ be the face of $\alpha$ which appears in the filtration latest. Then, using the same notation as in Theorem \ref{main-theorem-of-paper} let $l(\alpha)$ and $r(\alpha)$ be the other two faces of $\alpha$. Since we do not explicitly store the matching $\mu_1$ we need to find $\mu_1(u(\alpha)), \mu_1(l(\alpha))$ and $\mu_1(r(\alpha))$ by computing the lunes of $u(\alpha), l(\alpha)$ and $r(\alpha)$. Note that the lunes of $l(\alpha)$ and $r(\alpha)$ may not exist since $r(\alpha)$ and $l(\alpha)$ may be in $R_{1}(\mu_{1})$. $u(\alpha)$ will not have an empty lune since the $0$-simplex face common to $l(\alpha)$ and $r(\alpha)$ will be, by definition, contained in $\lune (u(\alpha))$. We have already shown, for a given $1$-simplex $e$, the complexity of finding $\lune (e)$ and the number of connected components of $\lune(e)$ is bounded by $O(n^2 \log (n))$. If $\alpha$ is not in $R_{2}(\mu_1)$ then the same argument applies with $u(\alpha)$ replaced with $\mu(\alpha)$. Thus the process of computing $\reach (\alpha)$ consists of computing the lune and finding its connected components for up to $O(n^2)$ times. Thus the complexity of computing $\reach (\langle x_{i} e \rangle)$ can be bounded by $O(n^4 \log (n))$. Since the main for loop is executed $O(n^2)$ times it follows that the total complexity can be bounded by $O(n^6 \log (n))$. 

    It should be said that $O(n^6 \log (n))$ is an extremely conservative overestimate. For the overwhelming majority of $e\in \vr_{\infty}^{1}(X)$, $\lune(e)$ will have one connected component. This means that with the exception of a small subset of $\vr_{\infty}^{1}(X)$, the code inside the main for-loop witll have complexity $O(n)$. Let $b(X)$ denote the number of $1$-simplices with more than $1$-connected component in their lune. Then the complexity can be rewritten in the form 

    \begin{equation}
        O( (n^2 - b(X))n + b(X)n^4 \log (n))
    \end{equation}

    Typically $b(X)$ is of the order $O(n)$ so this gives a slightly more favourable complexity of $O(n^5 \log (n))$. 
    
    The benefit of this algorithm is that it is highly parralelizable. We now compute the complexity of the algorithm when the main loop is run over $m$ machines. We assumme that each machine has access to the distance matrix and $\dvr_{\bullet}^{1}(X)$. Since we are dividing the execution of the main loop over $m$ machines the overall complexity is divided by $m$. That is, the complexity is reduced to 

    \begin{equation}
        O\left( \tfrac{1}{m}(n^2 - b(X))n + b(X)n^4 \log (n) \right)
    \end{equation}

    In practice, the time complexity will be larger than this, as there will be some time complexity associated with relaying the results to the final machine that compute the persistent homology. 

    \subsection{Memory Usage}
    One of the benefits of Algorithm \ref{main-algorithm} is that it will have low memory usage. Until one actually computes $\ph{1}(X)$ using $\dvr_{\bullet}^{1}(X)$ the only things that need to be stored are the $2$-simplices that will be put into $\dvr_{\bullet}^{1}(X)$ (since we can always obtain the $1$-simplices by simply looking at the faces of the $2$-simplices) and $d_{X}$. Thus, excluding the storage of $d_{X}$, the memory usage of Algorithm \ref{main-algorithm} will be influenced by how many $2$-simplices are in $\dvr_{\bullet}^{1}(X)$. We make a conjecture as to the number of $2$-simplices required for point clouds that arise as samples from a manifold. 

    \begin{conjecture}
    Consider a finite sample of points $X$ from a $k$-dimensional manifold $M$ embedded in Euclidean space. Then $\dvr_{\infty}^{1}(X)$ consists of $O(kn)$ simplices. 
    \end{conjecture}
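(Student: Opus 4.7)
The plan is to reduce the count to two subproblems via the construction of the distilled complex. By Definition \ref{deg-q-distilled-vietoris-rips-complex-at-scale-r}, $\dvr_{\infty}^{1}(X)$ is generated by $\bigcup_{\sigma \in R_{2}(\mu_{1})} \reach(\sigma)$ together with all faces, so the total simplex count is controlled (up to a small multiplicative constant) by $\sum_{\sigma \in R_{2}(\mu_{1})} |\reach(\sigma)|$. I would bound this sum by separately estimating the number of critical 2-simplices and the size of each individual reach set.

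First I would count $|R_{2}(\mu_{1})|$. The matching $\mu_{1}$ pairs each 1-simplex $e$ with non-empty lune to exactly one of its cofaces, so each edge with $c_{e}$ lune components contributes exactly $c_{e} - 1$ critical 2-simplices, giving $|R_{2}(\mu_{1})| = \sum_{e} \max(c_{e} - 1, 0)$. Bounded doubling dimension of a $k$-manifold sample implies a uniform bound $c_{e} \leq C(k)$ (as established in \cite{koyama2024fastercomputationdegree1persistent}). The remaining task is to argue that the quantity $b(X)$ from the preceding complexity analysis --- the number of edges with $c_{e} > 1$ --- satisfies $b(X) = O(n)$. Geometrically, the lune of a short edge lying in a locally Euclidean chart of $M$ is essentially convex and hence connected, while long edges with disconnected lunes must either track a topological generator of $H_{1}(M)$ (yielding $O(1)$ such edges per generator) or arise from local curvature (yielding $O(n)$ total via a ball-packing cover of $M$).

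Next I would bound $|\reach(\sigma)|$ for each $\sigma \in R_{2}(\mu_{1})$. Each vertex of $G_{2}(\mu_{1})$ has out-degree at most $C(k)$, since its outgoing edges correspond to matched 1-simplex faces of geometric neighbours which themselves satisfy the same lune bound. Acyclicity of $\mu_{1}$ forces directed paths to descend monotonically in the filtration, and the doubling property confines the reachable 2-simplices to a combinatorially bounded neighbourhood; an induction on path length in $G_{2}(\mu_{1})$ should then yield $|\reach(\sigma)| = O_{k}(1)$. Combined with the $O(kn)$ count of critical 2-simplices, this produces the desired bound.

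The step I expect to fight hardest is rigorously establishing $b(X) = O(n)$. This demands quantitative control relating the reach of $M$ (in the Federer sense), its injectivity radius, and the density of $X$ --- probably via a covering of $M$ by geodesic balls of radius comparable to the reach of $M$, inside which the lune structure reduces to a Euclidean one with convex and therefore connected lunes, together with a bound on how many edges can cross between charts. A secondary difficulty is the uniform per-simplex bound on $|\reach(\sigma)|$: for irregular samples individual reach sets may genuinely exceed constant size, and an amortised argument summing reach sizes across all of $R_{2}(\mu_{1})$ may be required in place of a pointwise estimate.
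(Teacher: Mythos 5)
The paper does not prove this statement: it is stated as a conjecture and supported only by the numerical evidence in Figure \ref{no-2-simplices-graphs} (linear growth of the number of $2$-simplices for samples of a cube and a sphere up to $N=700$). So there is no proof in the paper to compare yours against; what you have written is a plan for an open problem, and it should be judged on whether its steps could be completed. Your top-level decomposition is the right one and is exactly what Definition \ref{deg-q-distilled-vietoris-rips-complex-at-scale-r} suggests: bound $\sum_{\sigma \in R_{2}(\mu_{1})} |\reach(\sigma)|$ by bounding the number of critical $2$-simplices and the size of each reach set. The identity $|R_{2}(\mu_{1})| = \sum_{e} \max(c_{e}-1,0)$ is correct given how $\mu_{1}$ and $L^{1}$ are defined.

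Both halves, however, contain gaps you have named but not closed, and one of them is more serious than you suggest. For the reach sets, acyclicity of $G_{2}(\mu_{1})$ does not bound the length of directed paths --- an acyclic digraph on $N$ vertices admits paths of length $N-1$ --- so a per-vertex out-degree bound $C(k)$ combined with ``induction on path length'' gives at best $C(k)^{\ell}$ where $\ell$ is the maximal path length, which a priori is $\Theta(n^{2})$. The assertion that ``the doubling property confines the reachable $2$-simplices to a combinatorially bounded neighbourhood'' is precisely the missing lemma, not a known fact: gradient paths of the apparent-pairs matching can in principle propagate far across the point cloud, and you would need a genuinely new geometric locality result (or the amortised bound you mention in your last sentence) to rule this out. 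For $b(X)=O(n)$, note that the paper itself only says this holds ``typically''; the conjecture as stated imposes no density or genericity hypothesis on the sample, and your own proposed argument requires quantitative control of sampling density relative to the reach of $M$, so at best you would prove a modified statement with added hypotheses. Finally, even if both steps went through, doubling-type constants for a $k$-manifold scale like $2^{O(k)}$, so your route would yield $O_{k}(n)$ rather than the literal $O(kn)$ claimed; either the conjecture's dependence on $k$ needs a separate argument or the statement you prove will be weaker in $k$. In short: reasonable skeleton, but the two lemmas it rests on are themselves open, and you have correctly identified them as the hard part.
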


    The graphs in Figure \ref{no-2-simplices-graphs} seem to support this conjecture. 
\section{Higher Degree Equivalents of the Relative Nieghborhood Graph}

The relative Neighborhood Graph, first introduced in \cite{ToussaintRNG},  is a useful tool for finding the number of non-apparent homology classes in $\ph{1}(X)$ and is also useful for finding representatives of homology classes in $\ph{1}(X)$ \cite{koyama2024fastercomputationdegree1persistent}. In this section we present higher degree equivalents to $\rng(X)$. We will also present a more useful and easier to visualise ``clipped" version of these higher degree equivalents. 

\subsection{Relative Neighborhood Complexes}
    The Relative Neighborhood Graph of $X$ is the set of $1$-simplices with empty lune. We extend this definition in the most obvious fashion.

    \begin{definition}[Degree-$q$ Relative Neighborhood Complex]
        Consider a metric space $X$ and let $\vr_{\infty}^{q}(X)$ be the set of $q$-simplices which appear in the Vietoris-Rips filtration. Let $\rnc_{q}(X) \subset \vr_{\infty}^{q}(X)$ be the set of $q$-simplices $\sigma$ such that $\lune(\sigma) = \emptyset$. 
    \end{definition}

    \begin{remark}
        $\rnc_{1}(X)$ is the same as $\rng(X)$. 
    \end{remark}

    This Definition is certainly the most natural extention of $\rng(X)$, however unlike $\rng(X)$ isn't really insightful.
    In order to obtain more insightful extensions of $\rng(X)$ it is helpful to think of a modified version of $\rng(X)$. 

    \begin{definition}[Clipped Relative Neighborhood Graph]
        Consider a finite metric space $X$ and let $\mu_{1}$ be the acyclic partial matching defined on $\rvr_{\bullet}^{1}(X)$. Recall that $R_{1}(\mu_{1})$ is the set of critical $1$-simplices. Then we denote the following set of $1$-simplices 

        \begin{equation}
            \crng(X) := R_{1}(\mu_{1}) \cap \dvr_{\infty}^{1}(X)
        \end{equation}
as the ``clipped" relative neighborhood graph. 
    \end{definition}

We can easily extend the notion of clipped Relative Neigborhood Graphs to that of clipped relative neighborhood complexes. 

\begin{definition}[Clipped Relative Neighborhood Complexes]
    Consider a finite metric space $X$ and let $\mu_{q}$ be the acyclic partial matching defined on $\rvr_{\bullet}^{q}(X)$. Recall that $R_{q}(\mu)$ is the set of critical $q$-simplices. Then we denote the following set of $q$-simplices

    \begin{equation}
        \crnc_{q}(X) := R_{q}(\mu_{q})\cap \dvr_{\infty}^{q}(X)
    \end{equation}
\end{definition}

\begin{remark}
    $\crnc_{1}(X)$ is the same as $\crng(X)$
\end{remark}

In Figure \ref{dragon-point-cloud} we show two point clouds with $\crnc_{1}(X)$ and $\crnc_{2}(X)$ shown.

\begin{figure}[h!]
    \begin{subfigure}[t]{.5\textwidth}
    \centering
    \includegraphics[scale = 0.35]{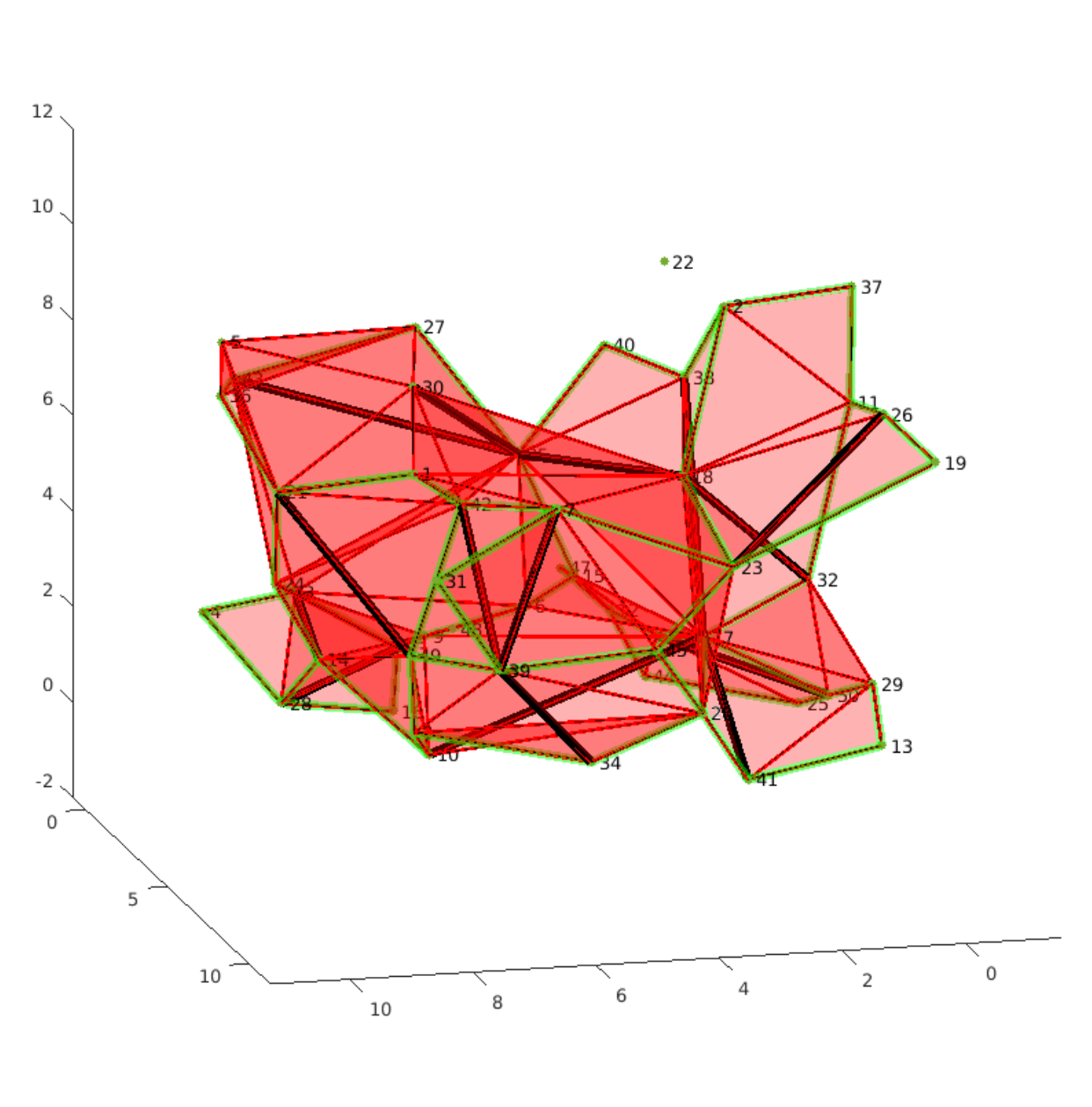}
    \subcaption{}
\end{subfigure}
\begin{subfigure}[t]{.5\textwidth}

    \centering
    \includegraphics[scale = 0.35]{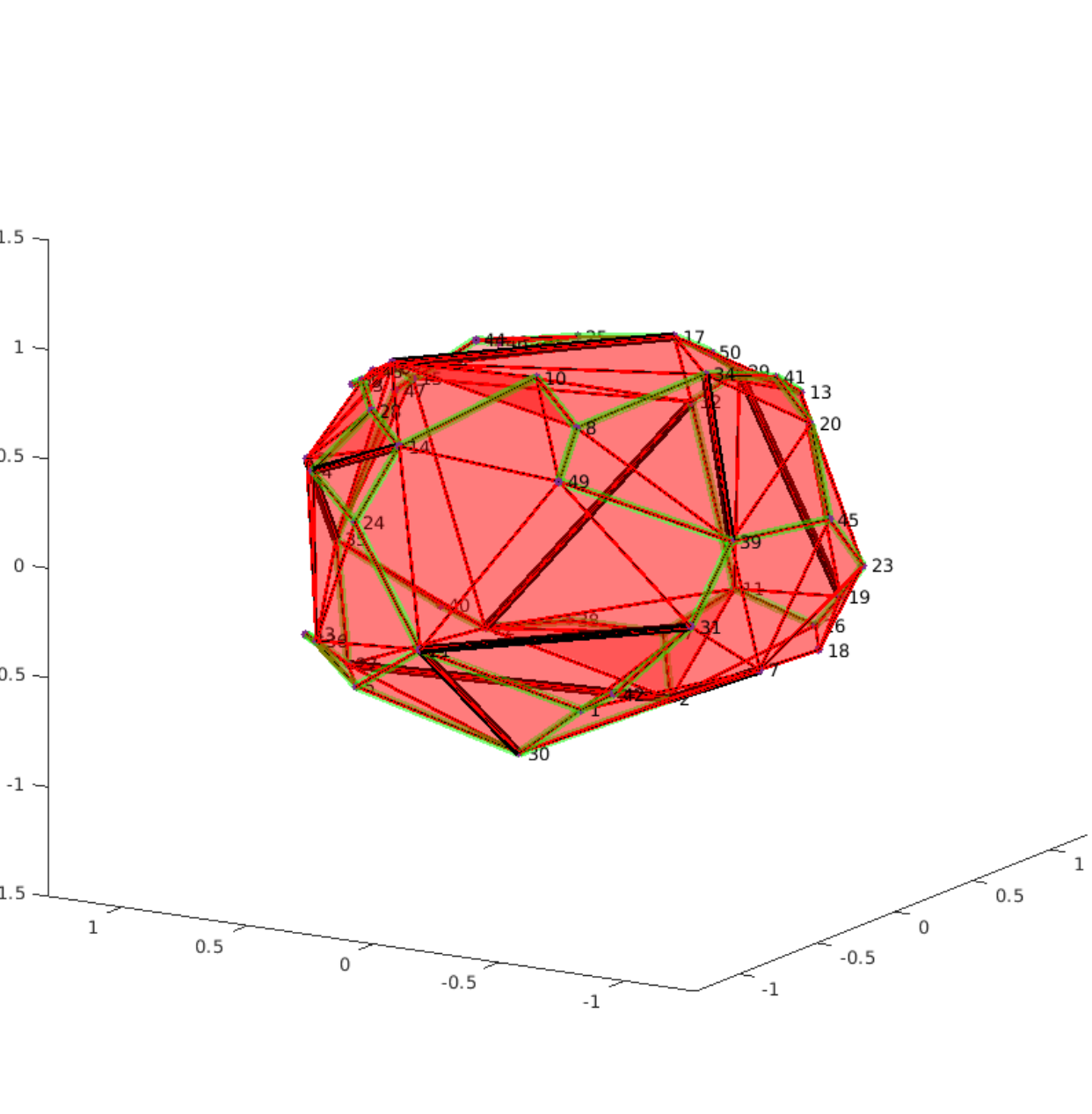}
 \subcaption{}
    \end{subfigure}
\caption{(a) $\crnc_{1}(X)$ and $\crnc_{2}(X)$ of 50 points uniformly distributed over a cube. $\crnc_{1}(X)$ is depicted in green and $\crnc_{2}(X)$ is depicted in red. (b) $\crnc_{1}(X)$ and $\crnc_{2}(X)$ of 50 points uniformly distributed over a cube. $\crnc_{1}(X)$ is depicted in green and $\crnc_{2}(X)$ is depicted in red.}
\label{dragon-point-cloud}
\end{figure}

The distilled Vietoris-Rips filtration uses considerably less 2-simplices. In the following, point clouds with $N \leq 700$ were generated using a uniform random sampling from a $10 \times  10 \times 10$ cube and then from a unit sphere. Figure \ref{no-2-simplices-graphs} shows that the number of $2$-simplices used to create $\dvr_{\infty}^{1}(X)$ varies linearly with the size of the point cloud over this range of $N$.  

\begin{figure}[h!]
    \begin{subfigure}[t]{.5\textwidth}
    \centering
    \includegraphics[scale = 0.5]{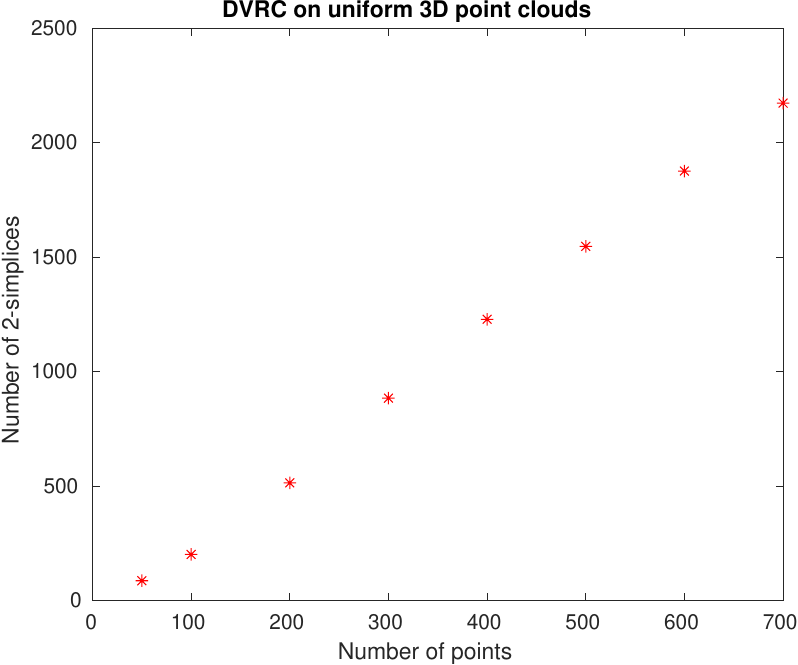}
    \subcaption{}
\end{subfigure}
\begin{subfigure}[t]{.5\textwidth}

    \centering
    \includegraphics[scale = 0.5]{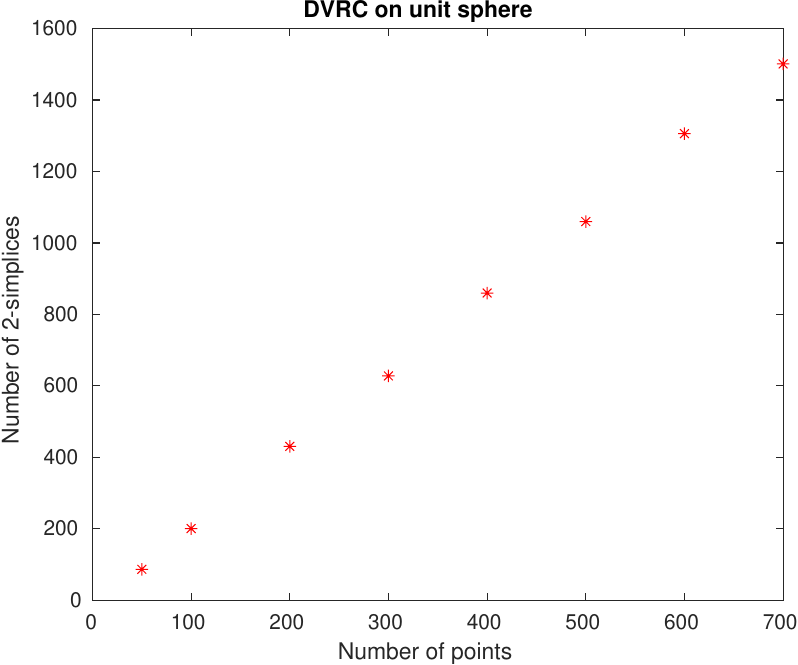}
 \subcaption{}
    \end{subfigure}
\caption{(a) Number of $2$-simplices used in $\dvr_{\infty}^{1}(X)$ (DVRC) on point clouds of size $50, 100, 200, 300, 400, 500, 600, 700$ generated from a uniform distribution in a $10\times 10\times 10$ cube. (b) Number of $2$-simplices used in $\dvr_{\infty}^{1}(X)$ (DVRC) on point clouds of size $50, 100, 200, 300, 400, 500, 600, 700$ generated on a unit sphere.}
\label{no-2-simplices-graphs}
\end{figure}

\newpage

\bibliography{references}
\bibliographystyle{ieeetr}

\end{document}